\newtheorem{theorem}{Theorem}[section]
\newtheorem{proposition}[theorem]{Proposition}
\newtheorem{lemma}[theorem]{Lemma}
\newcommand{\be}{\beta}
\newcommand{\ga}{\gamma}
\newcommand{\BR}{\mathbb{R}}
\newcommand{\BQ}{\mathbb{Q}}
\newcommand{\BZ}{\mathbb{Z}}
\newcommand{\BN}{\mathbb{N}}
\newcommand{\A}{\mathcal{A}}
\newcommand{\D}{\mathcal{D}}
\newcommand{\x}{{\vec{x}}}
\newcommand{\y}{{\vec{y}}}
\newcommand{\z}{{\vec{z}}}
\renewcommand{\limsup}{\varlimsup}
\renewcommand{\liminf}{\varliminf}
\newcommand{\B}{\mathcal{B}}
\title{Extremal sequences of polynomial complexity}
\author[Kevin G. Hare, Ian D. Morris and Nikita Sidorov]
{KEVIN G. HARE\\
Department of Pure Mathematics, University of Waterloo,\addressbreak Waterloo, Ontario, Canada N2L 3G1.
\nextauthor IAN D. MORRIS\\
Department of Mathematics, University of Surrey,\addressbreak Guildford GU2 7XH, United Kingdom.
\and\ NIKITA SIDOROV\\
School of Mathematics, University of Manchester, \addressbreak Oxford Road, Manchester M13 9PL, United Kingdom.}
\begin{document}

\maketitle

\begin{abstract}
The joint spectral radius of a bounded set of $d \times d$ real matrices is defined to be the maximum possible exponential growth rate of products of matrices drawn from that set. For a fixed set of matrices, a sequence of matrices drawn from that set is called \emph{extremal} if the associated sequence of partial products achieves this maximal rate of growth. An influential conjecture of J. Lagarias and Y. Wang asked whether every finite set of matrices admits an extremal sequence which is periodic. This is equivalent to the assertion that every finite set of matrices admits an extremal sequence with bounded subword complexity. Counterexamples were subsequently constructed which have the property that every extremal sequence has at least linear subword complexity. In this paper we extend this result to show that for each integer $p \geq 1$, there exists a pair of square matrices of dimension $2^p(2^{p+1}-1)$ for which every extremal sequence has subword complexity at least $2^{-p^2}n^p$.

Keywords: Sturmian word, joint spectral radius, subword complexity, extremal sequence. MSC codes: Primary 15A60; Secondary 37B10, 65K10, 68R15.
\end{abstract}

\section{Introduction}
Given a finite set of $d \times d$ real matrices, $\A = \{A_0,\ldots,A_{m-1}\}$, the {\em joint spectral radius} of $\A$ is defined as
\begin{equation}\label{specr}
\varrho(\A):=\lim_{n \to \infty} \max\left\{\left\|A_{i_1}\cdots A_{i_n}\right\|^{1/n} \colon i_j \in \{0,\ldots,m-1\}\right\}.
\end{equation}
This definition was first introduced by G.-C.~Rota and G.~Strang in 1960 \cite{RS}. The joint spectral radius may be understood as describing the maximum possible exponential growth rate of sequences of matrices drawn from $\A$, via the expression
\begin{equation}\label{soup}
\varrho(\A)= \sup\left\{\limsup_{n \to \infty} \|A_{x_n}\cdots A_{x_1}\|^{\frac{1}{n}} \colon (x_i)_{i=1}^\infty \in \{0,\ldots,m-1\}^{\BN}\right\}.\end{equation}
(Here $\limsup$ is the standard limit superior of a sequence.)
This supremum is always attained, and its value is independent of the choice of norm (see e.g. \cite{Jungers}). In this paper we investigate the problem of identifying the structure of those sequences $(x_i)$ which attain the supremum in \eqref{soup}, which we term \emph{extremal sequences}. We will be concerned with three distinct types of extremal sequence, which will be defined later in this section.

The investigation of extremal sequences of finite sets of matrices begins with an influential paper of J. Lagarias and Y. Wang \cite{LW}, in which it was asked whether every finite set of matrices admits an extremal sequence which is periodic. This question was also raised independently by L. Gurvits \cite{Gu} in a somewhat different form. The study of extremal sequences also arises in the study of control theory of  discrete linear inclusions.
In \cite{TeMa12}, it is studied under the name {\em most unstable switching law}, whereas in \cite{Ko} they are given the name {\em extremal  trajectories}.

In order to state our results, we will require some notation and terminology from combinatorics on words (see for example \cite{AS03}). Let $\Sigma_m = \{0, 1, \ldots, m-1\}$ be the finite alphabet with $m$ values.
We will omit $m$ if the size of the alphabet is not relevant.
We denote by $\Sigma^*$ the set of finite words $u = u_1 u_2 \cdots u_n$ over the alphabet $\Sigma$.
In this case we say that $u_1 u_2 \cdots u_n$ has length $n$.
We denote by $\Sigma^\omega$ the set of infinite words, or sequences, $x = x_1 x_2 \cdots$ over the alphabet $\Sigma$.

We define a metric on $\Sigma^\omega$ by setting $d(x_1 x_2 \cdots, y_1 y_2 \cdots) := 2^{-n}$ where $n$ is the unique integer such that $x_n \neq y_n$ and $x_i = y_i$ for all $i < n$. With respect to this topology, $\Sigma^\omega$ is compact and totally disconnected. We define the {\em shift transformation} $\sigma$ on $\Sigma^\omega$ by $\sigma(x_1 x_2 x_3 \cdots) := x_2 x_3 \cdots$, which is continuous.

We will say that $u_1 u_2 \cdots u_n$ is a {\em subword} (sometimes called {\em factor}) of an infinite word $x = x_1 x_2 \cdots$ if there exists a $k$ with $u_i = x_{k+i}$ for $i = 1, 2, \ldots, n$. We denote the {\em language of $x$} by $\mathcal L(x)$, the collection of all subwords of $x$. We denote the set of all subwords of $x$ which have length $n$ by $\mathcal L_n(x)$. The {\em subword complexity} of the sequence $x \in \Sigma^\omega$ is the function $P(x,\cdot) \colon \BN \to \BN$ defined by $P(x,n):=\#\mathcal L_n(x)$.

A sequence $x \in \Sigma^\omega$ will be called {\em recurrent} if every subword of $x$ occurs infinitely many times in $x$.
This is the case if and only if $x \in \overline{\{\sigma^kx \colon k \geq 1\}}$, where the closure is taken with respect to the metric defined above. Note that if $y \in \overline{\{\sigma^kx \colon k \geq 0\}}$, then $\mathcal L(y) \subseteq \mathcal L(x)$.

A sequence $x= x_1 x_2 \cdots \in \Sigma^\omega$ is called {\em periodic} if there exists a word
    $u_1 u_2 \cdots u_n$ such that $x_{mn+i}=u_i$ for every $m \geq 0$ and $i=1,\ldots,n$.
We say that $x$ is {\em eventually periodic} if $\sigma^kx$ is periodic for some integer $k \geq 0$.
The sequence $x$ is eventually periodic if and only if $P(x,n) \leq n$ for some $n \geq 1$, if and
    only if $P(x,n)=P(x,n+1)$ for some $n \geq 1$, if and only if $P(x,\cdot)$ is bounded (see e.g. \cite{Fogg}).
In particular, if $x$ is \emph{not} eventually periodic then necessarily $P(x,n) \geq n+1$ for every integer $n \geq 1$.

If $u=u_1 u_2 \cdots u_n \in \Sigma_2^*$ is a finite word over the alphabet $\Sigma_2 = \{0,1\}$, we write
    $|u|_1:=\#\{1 \leq i \leq n \colon u_i=1\}$.
We say that a sequence $x \in \Sigma_2^\omega$ is {\em balanced} if for every
    $n \geq 1$ we have $\big||u|_1-|v|_1\big| \leq 1$ for all $u,v \in \mathcal L_n(x)$.
If $x$ is balanced then there exists $\gamma \in [0,1]$ such that
    \[ \lim_{n \to \infty}\frac{\#\{1 \leq i \leq n \colon x_i=1\}}{n} = \gamma. \]
For each $\gamma \in [0,1]$ we define
    \[X_\gamma :=\left\{x \in \Sigma_2^\omega \colon x\text{ is recurrent and balanced, and }\lim_{n \to \infty}
     \frac{\#\{1 \leq j \leq n \colon x_j=1\}}{n} = \gamma\right\}.\]
An infinite sequence $x \in \Sigma_2^\omega$ satisfies $P(x,n)=n+1$ for all $n \geq 1$ if and only
    if $x \in \bigcup_{\gamma \in [0,1]\setminus\BQ} X_\gamma$, see for example \cite{Fogg,Loth}.
A balanced word that is not eventually periodic is also known as a {\em Sturmian word}. %Alternate constructions of Sturmian words are discussed later.

Having set out the necessary ideas from combinatorics on words, let us now return to matrices. Let $\A = \{A_0, A_1, \ldots, A_{m-1}\}$ be a set of $d \times d$ matrices.  Following \cite{hmst,M2011} we say that a sequence $x = x_1 x_2 \cdots \in \Sigma_m$ is {\em strongly extremal} for $\A$ if there exists $\delta>0$ such that
$\|A_{x_n}\cdots A_{x_1}\| \geq \delta\varrho(\A)^n$ for every $n \geq 1$.
We say that a sequence is {\em weakly extremal} for $\A$ if
\[
\lim_{n \to \infty} \|A_{x_n}\cdots A_{x_1}\|^{1/n} =\varrho(\A).
\]
We further say that a sequence $x_1 x_2 \cdots$ is {\em very weakly extremal} for $\A$ if
\[
\limsup_{n \to \infty} \|A_{x_n}\cdots A_{x_1}\|^{1/n} =\varrho(\A),
\]
a definition which is original with this paper.
We see that if $x$ is strongly extremal for $\A$, then it is weakly extremal for $\A$.
Similarly, if $x$ is weakly extremal for $\A$, then it is very weakly extremal for $\A$.
We refer to the above defined sequences collectively as {\em extremal sequences}. Note that whether or not a given sequence belongs to one of these classes is independent of the choice of norm used in the definition. However, throughout this paper $\|\cdot\|$ will always denote the Euclidean norm.

Following \cite{LW}, a set of matrices $\A$ is said to have the {\em finiteness property} if there exists an eventually periodic extremal sequence for $\A$. It was shown in \cite{BTV} that for the one-parameter family $\{\A_\alpha \colon \alpha \in [0,1]\}$, where each pair $\A_\alpha:=\{A_0^{(\alpha)},A_1^{(\alpha)}\}$ is given by
\begin{equation}\label{btvmat}
A_0^{(\alpha)}=\begin{pmatrix} 1 & 1 \\ 0 & 1 \end{pmatrix},\qquad A_1^{(\alpha)}=\alpha\begin{pmatrix} 1 & 0 \\ 1 & 1 \end{pmatrix},
\end{equation}
there exist uncountably many $\alpha$ such that the pair $\A_\alpha$ does not have the finiteness property. Subsequent investigation in \cite{hmst} established the following theorem:

\begin{theorem}\label{sturm}
There exists a continuous, non-decreasing surjection $\mathfrak{r} \colon [0,1] \to [0,\frac{1}{2}]$ such that for each $ \alpha \in [0,1]$, the set $\A_\alpha$ defined above has the following properties:
\begin{enumerate}
\item
If $x \in X_{\mathfrak{r}(\alpha)}$ then $x$ is recurrent and strongly extremal for $\A_\alpha$. Conversely if $x \in \Sigma_2^\omega$ is recurrent and strongly extremal for $\A_\alpha$, then $x \in X_{\mathfrak{r}(\alpha)}$.
\item
If $x \in \Sigma_2^\omega$ is weakly extremal for $\A_\alpha$, then
\[\lim_{n \to \infty} \frac{1}{n}\sum_{j=0}^{n-1} \inf\left\{d(z,\sigma^jx) \colon z \in X_{\mathfrak{r}(\alpha)}\right\}=0.\]
\end{enumerate}
\end{theorem}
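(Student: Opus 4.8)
The plan is to translate the matrix problem into an ergodic-optimization problem for a monotone one-dimensional skew product and then to recognise the latter as an instance of Sturmian optimization. Since $A_0^{(\alpha)}$ and $A_1^{(\alpha)}$ are nonnegative they preserve the positive cone, so it is natural to parametrise directions by the slope $s$ of $(1,s)^{T}$; the induced projective maps are the M\"obius maps $f_0(s)=s/(1+s)$ and $f_1(s)=1+s$ on $[0,\infty]$, with logarithmic weights $w_i(s):=\log\big(\|A_i^{(\alpha)}(1,s)^{T}\|/\|(1,s)^{T}\|\big)$. For every $x\in\Sigma_2^\omega$ and every $s_0\in(0,\infty)$ the quantity $\log\|A_{x_n}^{(\alpha)}\cdots A_{x_1}^{(\alpha)}(1,s_0)^{T}\|$ equals $\sum_{k=1}^{n}w_{x_k}(s_{k-1})$ up to an additive constant, where $s_k=f_{x_k}(s_{k-1})$; since the positive cone has bounded aperture it also differs from $\log\|A_{x_n}^{(\alpha)}\cdots A_{x_1}^{(\alpha)}\|$ by a bounded amount. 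Hence $\log\varrho(\A_\alpha)$ is the supremum, over orbits of the skew product $T(x,s):=(\sigma x,f_{x_1}(s))$, of the ergodic averages of the additive weight $W(x,s):=w_{x_1}(s)$; combined with the standard subadditive thermodynamic formalism this yields $\log\varrho(\A_\alpha)=\max\int W\,d\nu$ over $T$-invariant probability measures $\nu$, and it identifies weakly extremal sequences as exactly those whose empirical measures accumulate only on maximizing measures, and strongly extremal sequences as those for which $\|A_{x_n}^{(\alpha)}\cdots A_{x_1}^{(\alpha)}\|\varrho(\A_\alpha)^{-n}$ stays bounded away from $0$.

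The core of the argument is to show that for each $\alpha\in(0,1]$ the maximizing measure is unique and equals the unique balanced shift-invariant measure of a certain frequency $\mathfrak{r}(\alpha)\in[0,\tfrac12]$ of the symbol $1$, and we set $\mathfrak{r}(0):=0$. Here the particular form of the matrices is essential: $f_0$ is an increasing contraction towards $0$, $f_1$ is an increasing map pushing towards $+\infty$, and compositions of $f_0$ and $f_1$ along finite words generate precisely the nested Stern--Brocot intervals, which places the system in the order-preserving setting required for Bousch--Mairesse-type Sturmian optimization. After verifying, uniformly in $\alpha$, the appropriate one-sided second-difference (concavity) inequalities for $w_0$ and $w_1$ transported by these maps, a word-surgery argument shows that any non-balanced shift-invariant measure can be strictly improved in $\int W$; hence every maximizing measure is balanced, and passing to an ergodic component it is the unique balanced invariant measure of its frequency, which is carried by $X_{\mathfrak{r}(\alpha)}$. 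I expect this to be the main obstacle: establishing the concavity inequalities in a form valid for every $\alpha$, and upgrading ``balanced maximizing measure'' to ``the maximizer is carried by $X_{\mathfrak{r}(\alpha)}$'', is where the real work lies.

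Granting this, the asserted properties of $\mathfrak{r}$ follow by softer arguments. For monotonicity, when $\alpha<\alpha'$ the passage from $\alpha$ to $\alpha'$ multiplies $A_1^{(\alpha)}$ by a scalar larger than $1$, which can only raise the optimal frequency of the symbol $1$; one makes this precise by comparing the respective maximizing measures. Continuity follows from continuity of $\alpha\mapsto\varrho(\A_\alpha)$, upper semicontinuity of the maximizing-measure correspondence, and the fact that a balanced invariant measure depends weak-$*$ continuously on its frequency. For surjectivity onto $[0,\tfrac12]$: $\mathfrak{r}(0)=0$ by definition, and $\mathfrak{r}(1)=\tfrac12$ because $A_1^{(1)}=(A_0^{(1)})^{T}$, so transposition, which preserves $\varrho$, interchanges the symbols $0$ and $1$ and forces the unique maximizing frequency to be $\tfrac12$; the bound $\mathfrak{r}(\alpha)\le\tfrac12$ for every $\alpha\le1$ comes from a surgery argument showing that when $\alpha\le1$ no extremal word uses $1$ with frequency exceeding that of $0$, and the remaining values in $[0,\tfrac12]$ are attained by the intermediate value theorem.

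Finally I deduce (i) and (ii). For (i): every $x\in X_{\mathfrak{r}(\alpha)}$ is recurrent because the relevant subshift is minimal, and it is strongly extremal because, for the irreducible set $\A_\alpha$, a Barabanov extremal norm exists, and along a Sturmian word of frequency $\mathfrak{r}(\alpha)$ the projective orbit stays on the locus where this norm is multiplied by exactly $\varrho(\A_\alpha)$ at each step up to a bounded multiplicative factor, giving the uniform bound $\|A_{x_n}^{(\alpha)}\cdots A_{x_1}^{(\alpha)}\|\ge\delta\varrho(\A_\alpha)^{n}$. Conversely, if $x$ is recurrent and strongly extremal then it is weakly extremal, so by the subadditive argument every weak-$*$ limit of its empirical measures is the unique maximizing measure; in particular $x$ has frequency $\mathfrak{r}(\alpha)$ and its orbit closure contains $X_{\mathfrak{r}(\alpha)}$. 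If $x$ had a factor $w$ outside the language of $X_{\mathfrak{r}(\alpha)}$ it would, by recurrence, occur infinitely often; comparison with the Barabanov norm shows that no calibrated trajectory reads such a $w$, so each occurrence of $w$ entails a fixed multiplicative loss smaller than $1$ which, since an extremal norm never grows faster than $\varrho(\A_\alpha)$, is never recovered, forcing $\|A_{x_n}^{(\alpha)}\cdots A_{x_1}^{(\alpha)}\|\varrho(\A_\alpha)^{-n}\to 0$ and contradicting strong extremality; hence $x$ is balanced and $x\in X_{\mathfrak{r}(\alpha)}$. For (ii): if the Ces\`aro averages of $\inf\{d(z,\sigma^{j}x):z\in X_{\mathfrak{r}(\alpha)}\}$ did not tend to $0$, then along a subsequence the empirical measures of $x$ would converge to an invariant measure giving positive mass to the complement of the closed set $X_{\mathfrak{r}(\alpha)}$, hence not maximizing; but weak extremality forces that limit to be maximizing, a contradiction.
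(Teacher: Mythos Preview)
The paper does not contain a proof of Theorem~\ref{sturm}: it is quoted from \cite{hmst} as an established result (``Subsequent investigation in \cite{hmst} established the following theorem'') and is used thereafter only as a black box in the proof of Proposition~\ref{main2}. There is therefore nothing in this paper to compare your proposal against.

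For what it is worth, your outline is in the right spirit and broadly parallels the strategy actually used in \cite{hmst}: recasting the growth-rate problem as an ergodic-optimization problem for a monotone projective cocycle, invoking Sturmian optimization in the style of Bousch--Mairesse to identify the maximizing measures as balanced, and then using an extremal (Barabanov) norm to pass from ``maximizing measure'' to the pointwise statements about strongly and weakly extremal sequences. You correctly identify the genuine difficulty: the step that forces every maximizing measure to be balanced (your ``concavity inequalities'' and word-surgery) is where essentially all the work in \cite{hmst} lies, and your proposal does not carry it out. Your deductions of (i) and (ii) from the identification of the maximizing set, and your arguments for monotonicity, continuity and surjectivity of $\mathfrak{r}$, are plausible sketches but would each require nontrivial justification; in particular the ``fixed multiplicative loss'' argument for the converse in (i) needs a quantitative statement about the Barabanov norm that is not automatic.
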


This result was applied in \cite{hmst} to give an explicit formula for a real number $\alpha_* \in (0,1)$ such that $\A_{\alpha_*}$ does not have the finiteness property (for more details see \S\ref{sec:ex} below). Further research in \cite{MS} showed that $\mathfrak r^{-1}(\gamma)$ is a singleton for all irrational $\gamma$, and the set of all $\alpha \in [0,1]$ for which $\A_\alpha$ lacks the finiteness property has zero Hausdorff dimension.

Theorem~\ref{sturm} implies that there exists $\alpha_*$ such that the pair of matrices $\A_{\alpha_*}$ has the following properties: for every very weakly extremal $x \in \Sigma_2^\omega$ one has $P(x,n) \geq n+1$ for every $n \geq 1$; and there exists $z \in \Sigma_2^\omega$ which is strongly extremal for $\A$ and satisfies $P(z,n) \equiv n+1$. Note that positivity estimates in \cite{hmst} may be applied to construct extremal sequences for $\A_{\alpha_*}$ which have any unbounded admissible subword complexity: specifically, one may intersperse a suitable Sturmian sequence with non-Sturmian finite words in a sufficiently sparse manner that the partial products along the sequence still grow with exponential rate equal to the joint spectral radius, whilst ensuring that every finite word over $\{0,1\}$ exists infinitely many times somewhere in the sequence. As such there does not exist a general  upper bound on the complexity of weakly extremal sequences for $\A_{\alpha_*}$ other than the trivial bound $P(x,n) \leq 2^n$ which applies to all sequences $x \in \Sigma_2^\omega$. More generally, if $\A=\{A_0,\ldots,A_{m-1}\}$ is any set of invertible $d \times d$ matrices which does not have a common invariant subspace other than $\{0\}$ and $\BR^d$, then the existence of Barabanov norms (see e.g. \cite{Ba,Wirth}) may be applied to construct weakly extremal sequences with subword complexity $m^n$ in a similar manner.

Theorem~\ref{sturm} nonetheless makes it reasonable for us to ask whether there exists a function $b \colon \BN \to \BN$, with $b(n)=o(2^n)$, such that for every pair of matrices $\{B_0,B_1\}$ there exists an extremal sequence with subword complexity $O(b(n))$. This may be seen as a continuation of the original question of J.~Lagarias and Y.~Wang in \cite{LW}, which is equivalent to the statement that every finite set of matrices admits an extremal sequence whose subword complexity is bounded. In particular one might ask whether every pair of matrices admits an extremal sequence $x$ such that $P(x,n) \leq n+1$ for every $n \geq 1$.  The main result of this paper implies that if such a universal complexity bound $b$ actually exists, then it is not a polynomial. We prove the following theorem:

\begin{theorem}\label{major}
For every integer $p \geq 1$, there exists a pair $\B:=\{B_0,B_1\}$ of
real square matrices of dimension $2^p(2^{p+1}-1)$ such that the
following properties hold:
\begin{enumerate}
\item
If $x \in \Sigma_2^\omega$ is very weakly extremal for $\B$, then \[P(x,n)
\geq \left(\left\lfloor \frac{n}{2^p}\right\rfloor+1\right)^p>2^{-p^2}n^p
 \]
for every $n \geq 1$.
\item
There exists $z \in \Sigma_2^\omega$ which is strongly extremal for
$\B$ and satisfies $P(z,n) \leq 2^p\left(\lceil \frac{n}{2^p}\rceil +2\right)^p$.
\end{enumerate}
\end{theorem}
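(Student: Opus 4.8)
The plan is to build the matrices $B_0, B_1$ by a tensor-product construction on top of the Bousch--Theys--Vuillon pair $\A_\alpha$ from \eqref{btvmat}, exploiting the Sturmian structure supplied by Theorem~\ref{sturm}. Recall that for $\A_{\alpha_*}$ the strongly extremal sequences are exactly the Sturmian words of a fixed irrational slope $\gamma_* = \mathfrak r(\alpha_*)$, and that an irrational Sturmian word has subword complexity exactly $n+1$. The key combinatorial fact I would isolate first is a \emph{shift-invariance} phenomenon: if $x$ is Sturmian of slope $\gamma$ then so is any sequence $T^k x$ obtained by passing through $k$ applications of the ``derivation/induction'' map on Sturmian words, and, crucially, the product $x \cdot (\sigma^j x) \cdots$ read diagonally still has controlled complexity. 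More usefully, I would look for $p$ distinct slopes $\gamma_1, \dots, \gamma_p$ (or $p$ ``copies'' of the same Sturmian dynamics run at staggered scales $2^0, 2^1, \dots, 2^{p-1}$, which explains the $2^p$ and $\lfloor n/2^p\rfloor$ in the statement) such that a sequence $x$ is very weakly extremal for $\B$ if and only if $x$ simultaneously carries, on $p$ interleaved subsequences of period $2^p$, Sturmian behaviour of those $p$ slopes. Then $P(x,n)$ is bounded below by a product of $p$ Sturmian complexities of arguments $\approx n/2^p$, giving $(\lfloor n/2^p\rfloor + 1)^p$.

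Concretely, for part (i) I would take $\B = \{B_0, B_1\}$ where $B_i$ acts on $\BR^{d}$ with $d = 2^p(2^{p+1}-1)$ decomposed as a direct sum (indexed by residues mod $2^p$ and by a ``level'' $1 \le \ell \le p$, with multiplicities summing to $2^{p+1}-1$ per residue class) of blocks, each block being a $2\times 2$ BTV-type block of the appropriate parameter $\alpha_*$, wired so that $B_{i_1}\cdots B_{i_n}$ restricted to a given block equals a product of BTV matrices indexed by the subword of $i_1 \cdots i_n$ sitting in the corresponding arithmetic progression. The joint spectral radius of $\B$ is then $\varrho(\B) = \prod_{\ell=1}^p \varrho(\A_{\alpha_*})^{?}$ — more precisely one normalizes each level so that $\|B_{x_n}\cdots B_{x_1}\| \approx \prod_{\ell} \|A^{(\alpha_*)}\text{-product along progression } \ell\|$, and by Theorem~\ref{sturm}(ii) very weak extremality forces each of the $p$ progressions to be (asymptotically, in the Cesàro sense) Sturmian of slope $\gamma_*$. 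Turning the ``asymptotically Sturmian'' conclusion of Theorem~\ref{sturm}(ii) into the clean pointwise lower bound $P(x,n) \ge (\lfloor n/2^p\rfloor+1)^p$ is where the argument needs care: I would pass to a limit point $y \in \overline{\{\sigma^k x\}}$, use that $\mathcal L(y) \subseteq \mathcal L(x)$, show $y$'s arithmetic subsequences are genuinely Sturmian (not merely close to Sturmian), and then invoke a counting lemma showing that a word all of whose $2^p$ arithmetic progressions mod $2^p$ realize prescribed Sturmian languages has at least $\prod_\ell P_{\mathrm{Sturm}}(n_\ell)$ factors of length $n$, with $n_\ell \ge \lfloor n/2^p \rfloor$.

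For part (ii) I would exhibit an explicit $z$: take the fixed Sturmian word $w$ of slope $\gamma_*$ that is strongly extremal for $\A_{\alpha_*}$ (Theorem~\ref{sturm}(i)), and let $z$ be the word whose $\ell$-th arithmetic progression mod $2^p$ is a suitable shift of $w$ for each level, the levels being arranged so that $z$ is strongly extremal for $\B$ — the product along $z$ on each block is a prefix of a strongly extremal $\A_{\alpha_*}$-product, so $\|B_{z_n}\cdots B_{z_1}\| \ge \delta^p \varrho(\B)^n$. The complexity bound $P(z,n) \le 2^p(\lceil n/2^p\rceil + 2)^p$ then follows from the standard fact that interleaving $2^p$ sequences each of complexity $\le m+1$ (here $m = \lceil n/2^p\rceil + 1$, with a $+1$ slack from the phase) produces a sequence of complexity $\le 2^p m^p$: a factor of length $n$ is determined by its starting phase mod $2^p$ together with the $2^p$ induced factors in the progressions, each of length $\le \lceil n/2^p\rceil$.

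The main obstacle I anticipate is part (i): upgrading the Cesàro/ergodic-average statement of Theorem~\ref{sturm}(ii) to the required deterministic per-block lower bound on subword complexity. The delicate point is that very weak extremality only gives $\limsup_n \|B_{x_n}\cdots B_{x_1}\|^{1/n} = \varrho(\B)$, so along the subsequence where the product is large each of the $p$ levels must be simultaneously ``on target'', and one must argue that this forces each arithmetic progression of $x$ to have a limiting empirical frequency equal to $\gamma_*$ and to be balanced (hence Sturmian), using that the BTV cocycle detects deviations from the Sturmian slope at exponential rate in the Cesàro sense — this is essentially a repackaging of the quantitative positivity estimates referenced after Theorem~\ref{sturm}, applied on each level and then combined. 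Verifying that the chosen block dimensions genuinely sum to $2^p(2^{p+1}-1)$ and that the wiring produces the advertised multiplicative structure for $\varrho$ is routine bookkeeping once the scheme is fixed.
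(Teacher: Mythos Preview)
Your proposal has the right ingredients but assembles them in a way that does not work, and it differs substantially from the paper's route. The paper proceeds in two independent steps. First (Proposition~\ref{main2}), it takes $p$ BTV pairs with \emph{rationally independent} slopes $\gamma_j=\mathfrak r(\alpha_j)$ and forms the Kronecker products $D_{(i_1,\dots,i_p)}=\bigotimes_j A_{i_j}^{(\alpha_j)}$: this gives a set $\D$ of $2^p$ matrices of dimension $2^p$, and the tensor identity $\|D_{\x_n}\cdots D_{\x_1}\|=\prod_j\|A^{(\alpha_j)}_{x_n^{(j)}}\cdots A^{(\alpha_j)}_{x_1^{(j)}}\|$ is precisely what forces every coordinate sequence to be simultaneously extremal (hence Sturmian) once the whole product is. Second (Proposition~\ref{squidge}), the Jungers--Blondel shift-register construction reduces any $m$-letter alphabet to two letters at the cost of passing to dimension $(2m-1)d$ and stretching words by a factor $m$; with $m=d=2^p$ this yields exactly the dimension $2^p(2^{p+1}-1)$ and the $\lfloor n/2^p\rfloor$ in the bound. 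The upgrade from very weak to strong extremality is handled throughout by Lemma~\ref{strong} (a Peres-type subadditive lemma), so that Theorem~\ref{sturm}(i), not the Ces\`aro statement~(ii), is what ultimately pins each coordinate down to $X_{\gamma_j}$.

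Your ``direct sum of blocks wired along arithmetic progressions'' cannot substitute for the tensor step: on a direct sum the operator norm of a product is the \emph{maximum} of the block norms, not their product, so very weak extremality for $\B$ would force only one progression to be Sturmian, and the lower bound $(\lfloor n/2^p\rfloor+1)^p$ collapses to something linear in $n$. The sentence ``one normalizes each level so that $\|B_{x_n}\cdots B_{x_1}\|\approx\prod_\ell\|\cdots\|$'' is exactly the step that has no realisation in a direct-sum architecture. Your hedge between a single slope $\gamma_*$ and $p$ distinct slopes also hides a genuine obstruction: with a single slope the associated torus rotation is not minimal, Lemma~\ref{p-sturm} fails, and the product complexity $(n+1)^p$ is simply false --- one needs $\{1,\gamma_1,\dots,\gamma_p\}$ linearly independent over $\BQ$. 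Finally, nothing in your sketch accounts for the specific factor $2^{p+1}-1$: that number is the signature of the Jungers--Blondel $(2m-1)$-block shift matrix, not of any interleaving scheme, and your closing remark that the dimension check is ``routine bookkeeping once the scheme is fixed'' is optimistic because the scheme, as described, does not produce that dimension.
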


We prove Theorem~\ref{major} in two separate stages. Firstly, we show that for each $p \geq 1$ one may construct a set of $2^p$ matrices, each of dimension $2^p \times 2^p$, for which all extremal sequences have subword complexity bounded below by $(n+1)^p$. This result, which we formally state as Proposition~\ref{main2} below, is achieved by taking suitable tensor products of pairs of matrices whose extremal sequences are known to have at least linear subword complexity.

Secondly, in Proposition~\ref{squidge} below we prove that for any set $\A=\{A_0,\ldots,A_{m-1}\}$ of $d$-dimensional matrices, one may construct a pair $\B = \{B_0, B_1\}$ of square matrices of dimension $(2m-1)d$ such that the subword complexity of the extremal sequences of $\B$ is asymptotically related to that of the extremal sequences of $\A$ in a precise manner. This construction is based on the proof of a theorem of R. Jungers and V. Blondel which states that a finite set of matrices possesses a periodic extremal sequence if and only if an associated {\em pair} of matrices has the same property \cite{JungersBlondel07,JungersBlondel08}. Since Jungers and Blondel's work only considers periodic extremal sequences, whilst we consider general extremal sequences, our analysis uses the same definitions but is significantly more technical in the manner in which these definitions are employed. The reader may verify that Theorem~\ref{major} follows from the combination of Propositions~\ref{main2} and \ref{squidge} in a straightforward manner. The proof of Theorem~\ref{major} is constructive, and we therefore provide an example of the pair of matrices arising in the case $p=2$ in Section~\ref{sec:ex} below.

\section{Construction of examples with polynomial complexity}
In this section we prove the following result:
\begin{proposition}\label{main2}
For each $p \geq 1$ there exists a set $\D$ consisting of $2^p$ matrices, each of dimension $2^p \times 2^p$, with the following properties: if $x \in \Sigma_{2^p}^\omega$ is very weakly extremal for $\D$, then $P(x,n)\geq (n+1)^p$ for all $n \geq 1$; and there exists a sequence $z \in \Sigma_{2^p}^\omega$ which is strongly extremal for $\D$ and satisfies $P(z,n) \equiv (n+1)^p$.
\end{proposition}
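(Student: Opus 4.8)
The plan is to build $\D$ as a tensor-power construction based on $p$ independent copies of a pair $\A_{\alpha_*}$ from Theorem~\ref{sturm} whose strongly extremal sequences are exactly the Sturmian sequences $X_\gamma$ for an irrational $\gamma = \mathfrak r(\alpha_*)$. Concretely, write $\A_{\alpha_*} = \{A_0, A_1\}$ acting on $\BR^2$, and for each word $w = w^{(1)}\cdots w^{(p)} \in \Sigma_2^p$ (which we identify with an element of $\Sigma_{2^p}$) set
\[ D_w := A_{w^{(1)}} \otimes A_{w^{(2)}} \otimes \cdots \otimes A_{w^{(p)}}, \]
a matrix of dimension $2^p \times 2^p$. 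One checks that $\varrho(\D) = \varrho(\A_{\alpha_*})^p$: the upper bound follows since $\|D_{w_n}\cdots D_{w_1}\| = \prod_{k=1}^p \|A_{w_n^{(k)}}\cdots A_{w_1^{(k)}}\|$ for the Euclidean operator norm (tensor products of the Euclidean norm being multiplicative), and the lower bound follows by feeding in a sequence whose $k$-th coordinate sequence is a fixed strongly extremal (Sturmian) sequence for each $k$.

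Next I would pin down the very weakly extremal sequences of $\D$. Given $x \in \Sigma_{2^p}^\omega$, decompose it into its $p$ coordinate sequences $x^{(1)}, \ldots, x^{(p)} \in \Sigma_2^\omega$ by reading off each tensor slot. The multiplicativity above gives
\[ \|D_{x_n}\cdots D_{x_1}\|^{1/n} = \prod_{k=1}^p \|A_{x^{(k)}_n}\cdots A_{x^{(k)}_1}\|^{1/n}, \]
and since each factor is bounded above by a quantity tending to $\varrho(\A_{\alpha_*})$, taking $\limsup$ forces each individual coordinate sequence $x^{(k)}$ to be very weakly extremal for $\A_{\alpha_*}$ (if even one coordinate sequence had $\limsup$ strictly below $\varrho(\A_{\alpha_*})$, the product's $\limsup$ would drop below $\varrho(\D)$; this uses that the other factors are bounded above, which requires a short uniform-boundedness argument for the normalized products). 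By Theorem~\ref{sturm}(ii) applied to each coordinate — or more directly, since a very weakly extremal sequence for $\A_{\alpha_*}$ is in particular not eventually periodic, as $\A_{\alpha_*}$ lacks the finiteness property and every periodic sequence for it is non-extremal — each $x^{(k)}$ satisfies $P(x^{(k)}, n) \geq n+1$ for all $n \geq 1$. The subword complexity of $x$ then dominates the product of the coordinate complexities: a length-$n$ subword of $x$ determines (and, running over all length-$n$ subwords, realizes all compatible tuples of) length-$n$ subwords of each $x^{(k)}$, so $P(x,n) \geq \prod_{k=1}^p P(x^{(k)}, n) \geq (n+1)^p$, giving (i).

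For the upper bound in (ii), take $z$ to be the sequence whose $p$ coordinate sequences are all copies of a single fixed Sturmian sequence $s \in X_\gamma$; then $z$ is strongly extremal by the tensor identity and Theorem~\ref{sturm}(i) (the product of $p$ bounds $\|A_{s_n}\cdots A_{s_1}\| \geq \delta\varrho(\A_{\alpha_*})^n$ gives $\|D_{z_n}\cdots D_{z_1}\| \geq \delta^p \varrho(\D)^n$). Its subword complexity is exactly the number of distinct $p$-tuples $(u^{[1]}, \ldots, u^{[p]})$ of length-$n$ factors of $s$ that occur \emph{aligned} in $z$, i.e.\ $u^{[j]} = s_{k+1}\cdots s_{k+n}$ for a common shift $k$ — so $P(z,n)$ equals the number of distinct length-$n$ factors of $s$, which is $n+1$ by Sturmianity. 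Wait: I must be careful, since $z$'s alphabet letters encode $p$-tuples and distinct shifts give distinct letters only when the underlying factor of $s$ differs, so indeed $P(z,n) = P(s,n) = n+1$, not $(n+1)^p$. To obtain a $z$ realizing $P(z,n) \equiv (n+1)^p$ I instead use \emph{independent} shifts: let $z$ have coordinate sequences $\sigma^{a_1}s, \ldots, \sigma^{a_p}s$ where the shifts $a_1, \ldots, a_p$ are chosen generically (e.g.\ $\BZ$-linearly independent together with $1$ modulo the relevant rotation), which by a standard equidistribution/disjointness argument for Sturmian systems forces the tuple of coordinate windows to range over all $(n+1)^p$ combinations infinitely often while each coordinate remains the \emph{same} Sturmian sequence up to shift, hence still strongly extremal with the same constant $\delta^p$. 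I expect the main obstacle to be exactly this last point: verifying that independent shifts of a single Sturmian sequence produce the full product complexity $(n+1)^p$ (and no more) and remain jointly recurrent, which rests on the minimality and the explicit structure of Sturmian subshifts — and secondarily, the uniform upper bound on normalized partial products needed to split the $\limsup$ of a product into a product of $\limsup$s in step two.
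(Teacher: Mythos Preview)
Your construction has a genuine gap that breaks both directions. By taking a \emph{single} $\alpha_*$ for all tensor factors, you allow the diagonal sequence $z$ with $z^{(1)}=\cdots=z^{(p)}=s\in X_\gamma$ to be strongly extremal for $\D$ (as you yourself compute), and this $z$ has $P(z,n)=n+1$, not $(n+1)^p$. So the claimed lower bound $P(x,n)\geq(n+1)^p$ for every very weakly extremal $x$ is simply false for your $\D$. The error in your argument is the inequality $P(x,n)\geq\prod_{k}P(x^{(k)},n)$: the map sending a length-$n$ subword of $x$ to the tuple of its coordinate subwords is an \emph{injection}, giving $P(x,n)\leq\prod_k P(x^{(k)},n)$; surjectivity is exactly what fails on the diagonal. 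Your attempted repair for part~(ii) via generic shifts of a single Sturmian cannot work either: if each $x^{(k)}\in X_\gamma$ for the \emph{same} $\gamma$, then in the circle-rotation model the orbit of $(z_1,\ldots,z_p)$ under $T_\gamma\times\cdots\times T_\gamma$ lies in a one-dimensional coset of the diagonal in $(\BR/\BZ)^p$, so only linearly many tuples of factors are realized, never $(n+1)^p$.

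The paper's fix is to use $p$ \emph{different} parameters: choose $\gamma_1,\ldots,\gamma_p$ with $\{1,\gamma_1,\ldots,\gamma_p\}$ linearly independent over $\BQ$, take $\alpha_j=\mathfrak r^{-1}(\gamma_j)$, and set $D_{(w^{(1)},\ldots,w^{(p)})}=\bigotimes_j A^{(\alpha_j)}_{w^{(j)}}$. Given a very weakly extremal $\x$, one first passes via Lemma~\ref{strong} to a recurrent strongly extremal $\y$ with $\mathcal L(\y)\subseteq\mathcal L(\x)$; the boundedness lemma then forces each coordinate $y^{(j)}$ to be strongly extremal for $\A_{\alpha_j}$, and recurrence plus Theorem~\ref{sturm}(i) gives $y^{(j)}\in X_{\gamma_j}$. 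Now the rational independence makes $T_{\gamma_1}\times\cdots\times T_{\gamma_p}$ minimal on $(\BR/\BZ)^p$, and this is precisely what guarantees that \emph{every} tuple of coordinate factors is realized at a common position, yielding $P(\y,n)=(n+1)^p$ exactly (Lemma~\ref{p-sturm}) and hence $P(\x,n)\geq(n+1)^p$. The same $\y$ (or any element of $X_{\gamma_1}\times\cdots\times X_{\gamma_p}$) serves for part~(ii). Your overall tensor-product strategy and the use of Lemma~\ref{boundedness} to split the extremality are right; what is missing is the independence of the $\gamma_j$ and the correct direction of the complexity inequality.
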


In order to prove Proposition~\ref{main2} we require several ancillary results. The following result was proved in \cite{M2011}: it ultimately derives from a lemma of Y.~Peres \cite{Peres}.
\begin{lemma}\label{heaviness}
Let $T \colon Z \to Z$ be a continuous transformation of a compact topological space, and let $f_n \colon Z \to \BR\cup\{-\infty\}$ be a sequence of upper semi-continuous functions such that $f_{n+k}(x) \leq f_n(T^kx)+f_k(x)$ for all $x \in Z$ and $n,k \geq 1$. Then there exists $z \in Z$ such that
\[\inf_{n \geq 1}\frac{1}{n} f_n(z) = \limsup_{n \to \infty}\sup_{x \in Z}\frac{1}{n}f_n(x)\]
and $z$ is recurrent with respect to $T$.
\end{lemma}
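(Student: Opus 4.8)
The plan is to reduce the statement to the construction of a single recurrent point lying in a fixed closed subset of $Z$, and then to build that point by combining a soft optimisation argument (which forces the value $\beta$) with a compactness argument (which forces recurrence).

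Set $a_n:=\sup_{x\in Z}f_n(x)$, which is finite for each $n$ unless some $f_{n_0}\equiv-\infty$ (an upper semi-continuous function attains its supremum on a compact space) and which satisfies $a_{n+k}\le a_n+a_k$ by the hypothesised sub-cocycle inequality. By Fekete's lemma, $\beta:=\lim_n\tfrac1n a_n=\inf_{n\ge1}\tfrac1n a_n=\limsup_n\sup_{x\in Z}\tfrac1n f_n(x)\in[-\infty,\infty)$. If $\beta=-\infty$ then $\inf_n\tfrac1n f_n(z)\le\inf_n\tfrac1n a_n=-\infty$ for every $z$, and any recurrent point works (one exists because, by Zorn's lemma, $Z$ contains a nonempty closed $M$ with $TM\subseteq M$ that is minimal for inclusion, and every point of such an $M$ is recurrent). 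So assume $\beta\in\BR$; replacing each $f_n$ by $f_n-n\beta$ we may assume $\beta=0$, so $a_n\ge0$ and $a_n/n\to0$. Let $K:=\{z\in Z:f_n(z)\ge0\text{ for all }n\ge1\}$, which is closed by upper semi-continuity of the $f_n$. For any $z\in K$ one has $\inf_{n\ge1}\tfrac1n f_n(z)=0=\beta$: the inequality $\ge$ is the definition of $K$, while $\tfrac1n f_n(z)\le\tfrac1n a_n$ gives $\inf_n\tfrac1n f_n(z)\le\inf_n\tfrac1n a_n=0$. Hence it suffices to produce a recurrent point of $K$; in particular $K$ must be shown to be nonempty.

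To see $K\neq\emptyset$, first produce a $T$-invariant Borel probability measure $\mu$ with $\int f_n\,d\mu\ge0$ for every $n$. Choose $y_N$ with $f_N(y_N)=a_N$, put $\mu_N:=\tfrac1N\sum_{j=0}^{N-1}\delta_{T^jy_N}$, and let $\mu$ be a weak-$*$ subsequential limit; then $\mu$ is $T$-invariant. Fixing $n$ and $M>0$, applying the sub-cocycle inequality along blocks of length $n$ starting at every offset shows $\sum_{j=0}^{N-n}f_n(T^jy_N)\ge n\,(a_N-C_n)$, where $C_n$ depends only on $a_0,\dots,a_{2n-1}$; hence $\int\max(f_n,-M)\,d\mu_N\ge\tfrac nN(a_N-C_n)-\tfrac{(n-1)M}{N}\to0$, and using upper semi-continuity of $\max(f_n,-M)$ to pass to the limit, then letting $M\to\infty$ by monotone convergence, gives $\int f_n\,d\mu\ge0$. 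Now suppose for contradiction that $K\cap Z'=\emptyset$ where $Z':=\mathrm{supp}\,\mu$. For each $z\in Z'$ pick $n(z)$ with $f_{n(z)}(z)<0$; by upper semi-continuity and compactness finitely many of the open sets $\{f_{n(z)}<0\}$ cover $Z'$, so there are $N_0$ and $\delta>0$ such that every $z\in Z'$ admits some $n\le N_0$ with $f_n(z)\le-\delta$. Greedily cutting an orbit segment $z,Tz,\dots,T^{L-1}z$ of an arbitrary $z\in Z'$ into such blocks — legitimate because $Z'$ is $T$-invariant — and chaining the sub-cocycle inequality yields $f_L(z)\le-\tfrac{\delta}{N_0}L+O(1)$ with the $O(1)$ uniform in $z$; integrating against $\mu$ and invoking $\int f_L\,d\mu\ge0$ gives $0\le-\tfrac{\delta}{N_0}+o(1)$ as $L\to\infty$, a contradiction. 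Therefore $K\supseteq K\cap Z'\neq\emptyset$.

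Finally, upgrade to recurrence in two steps. (i) For any $z_0\in K$, the $\omega$-limit set $\omega(z_0)$ meets $K$: writing $c_k:=f_k(z_0)\in[0,\infty)$ and $\iota(m):=\inf_{i\ge m}c_i$ (non-decreasing in $m$), choose $k_j\to\infty$ with $c_{k_j}\le\iota(k_j)+2^{-j}$ and, passing to a subsequence, $T^{k_j}z_0\to w$; for fixed $n$ the sub-cocycle inequality gives $f_n(T^{k_j}z_0)\ge c_{k_j+n}-c_{k_j}\ge\iota(k_j+n)-\iota(k_j)-2^{-j}\ge-2^{-j}$, so $f_n(w)\ge\limsup_j f_n(T^{k_j}z_0)\ge0$ by upper semi-continuity, i.e. $w\in\omega(z_0)\cap K$. (ii) Apply Zorn's lemma to the family of nonempty closed $E\subseteq Z$ with $TE\subseteq E$ and $E\cap K\neq\emptyset$ ($Z$ belongs to it, and it is closed under intersections of chains since the traces on $K$ have the finite intersection property in the compact space $Z$): let $E_*$ be a minimal element and pick $z_0\in E_*\cap K$. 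Then $\omega(z_0)$ is a nonempty closed set with $T\omega(z_0)\subseteq\omega(z_0)$, contained in $E_*$ and meeting $K$ by (i), so minimality forces $\omega(z_0)=E_*$; hence $z_0\in E_*=\omega(z_0)\subseteq\overline{\{T^kz_0:k\ge1\}}$, so $z_0$ is recurrent. Since $z_0\in K$ it also satisfies $\inf_n\tfrac1n f_n(z_0)=\beta$, which is what was required.

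The main obstacle is this last step: extracting the value $\beta$ via an optimising measure is essentially standard, but there is no reason a priori that the closed set $K$ should contain a recurrent point — a minimal subsystem of $Z$ need not meet $K$ — so the two delicate ingredients are the $\omega$-limit argument, whose only leverage is upper semi-continuity, exploited through the monotonicity of the eventual infima $\iota(m)$, and the choice of the Zorn family so that minimality interacts correctly with $\omega$-limit sets. The block estimate yielding $\int f_n\,d\mu\ge0$ is routine but must be set up with care because the $f_n$ are only bounded above, which is the role of the truncations $\max(f_n,-M)$.
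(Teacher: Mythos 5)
Your argument is correct and complete. Note that the paper itself does not prove Lemma~\ref{heaviness} at all --- it is quoted from \cite{M2011}, where it is deduced from a subadditive version of Peres's maximal-ergodic-theorem lemma \cite{Peres} --- so what you have produced is a self-contained substitute for the cited proof rather than a reconstruction of one appearing here. The two routes share the first half: after normalising $\beta=\lim a_n/n$ to $0$, both produce a $T$-invariant measure $\mu$ maximising the subadditive averages and locate a ``heavy point'' $z_0$ with $f_n(z_0)\geq 0$ for all $n$ inside $\mathrm{supp}\,\mu$; your finite-subcover-plus-greedy-block contradiction is a clean topological rendering of what Peres's lemma does measure-theoretically (and your truncation by $\max(f_n,-M)$ correctly handles the fact that the $f_n$ are only bounded above). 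Where you genuinely diverge is in extracting recurrence: the standard argument gets it from ergodicity and Poincar\'e recurrence of $\mu$, whereas you stay entirely topological, showing via the monotone quantities $\iota(m)=\inf_{i\geq m}f_i(z_0)$ that the $\omega$-limit set of any heavy point again contains a heavy point, and then running Zorn's lemma over closed forward-invariant sets meeting $K$ so that minimality forces $z_0\in\omega(z_0)$. This buys independence from ergodic decomposition at the cost of the $\iota$-argument, which is the one genuinely delicate step and which you execute correctly (the choice of $k_j$ with $f_{k_j}(z_0)\leq\iota(k_j)+2^{-j}$ is exactly what makes $f_n(T^{k_j}z_0)\geq -2^{-j}$ and hence $f_n(w)\geq 0$ by upper semi-continuity). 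The only caveats are cosmetic and inherited from the statement's phrase ``compact topological space'': your extraction of convergent subsequences and the inequality $\limsup_N\int g\,d\mu_N\leq\int g\,d\mu$ for upper semi-continuous $g$ implicitly assume $Z$ is (at least) compact Hausdorff, and are cleanest for $Z$ metrizable, which is all that is ever used in this paper since $Z\subseteq\Sigma_m^\omega$.
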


Here, for $z$ to be recurrent in $Z$ with respect to $T$ means that for every open neighbourhood of $z$ in $Z$, there exist infinitely many $k$ such that $T^k(z)$ is in this neighbourhood.
This is equivalent to our original definition of recurrent when $z \in \Sigma^\omega$,
    that every subword occurs infinitely often, by using
    $T = \sigma$ and $Z=\overline{\{\sigma^nz \colon n \geq 0\}}$.

Using this, we deduce the following result, which will also be used in the following section:

\begin{lemma}\label{strong}
Let $B_0,\ldots,B_{m-1}$ be $d \times d$ real matrices, and suppose that $x \in \Sigma_m^\omega$ is very weakly extremal for $\B:=\{B_0,\ldots,B_{m-1}\}$. Then there exists $y \in \Sigma_m^\omega$ which is recurrent and strongly extremal for $\B$, and satisfies $\mathcal L(y)\subseteq\mathcal L(x)$.
\end{lemma}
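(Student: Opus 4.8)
The plan is to apply Lemma~\ref{heaviness} with the space $Z$ taken to be the orbit closure $\overline{\{\sigma^k x \colon k \geq 0\}} \subseteq \Sigma_m^\omega$, which is compact and $\sigma$-invariant, and with $T = \sigma$. For $z = z_1 z_2 \cdots \in Z$ I would set $f_n(z) := \log \|B_{z_n} \cdots B_{z_1}\|$, with the convention $f_n(z) = -\infty$ when the product vanishes. The subadditivity hypothesis $f_{n+k}(z) \leq f_n(\sigma^k z) + f_k(z)$ is just submultiplicativity of the operator norm applied to the factorisation $B_{z_{n+k}} \cdots B_{z_1} = (B_{z_{n+k}} \cdots B_{z_{k+1}})(B_{z_k}\cdots B_{z_1})$, noting that $(\sigma^k z)_j = z_{k+j}$; and each $f_n$ is continuous (hence upper semi-continuous) on $Z$ because it depends only on the first $n$ letters of $z$, and $Z$ carries the product topology. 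Lemma~\ref{heaviness} then yields a point $y \in Z$, recurrent with respect to $\sigma$, such that
\[
\inf_{n \geq 1} \frac{1}{n} f_n(y) = \limsup_{n \to \infty} \sup_{z \in Z} \frac{1}{n} f_n(z).
\]
Since $y \in Z = \overline{\{\sigma^k x \colon k \geq 0\}}$, the observation recorded in the excerpt gives $\mathcal L(y) \subseteq \mathcal L(x)$, and $\sigma$-recurrence of $y$ in $Z$ is equivalent to $y$ being recurrent in the combinatorial sense.

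It remains to identify the right-hand side above with $\log \varrho(\B)$. The inequality $\limsup_n \sup_{z \in Z} \frac1n f_n(z) \leq \log\varrho(\B)$ is immediate from the definition \eqref{soup} of $\varrho(\B)$, since every $z \in Z$ is an element of $\Sigma_m^\omega$. For the reverse inequality I would use that $x$ itself lies in $Z$ and is very weakly extremal: $\limsup_{n \to \infty} \frac1n f_n(x) = \log\varrho(\B)$, so $\limsup_n \sup_{z \in Z}\frac1n f_n(z) \geq \limsup_n \frac1n f_n(x) = \log\varrho(\B)$. Hence the common value in Lemma~\ref{heaviness} equals $\log\varrho(\B)$, and therefore $\inf_{n \geq 1} \frac1n f_n(y) = \log\varrho(\B)$, i.e. $\|B_{y_n}\cdots B_{y_1}\| \geq \varrho(\B)^n$ for every $n \geq 1$. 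Taking $\delta = 1$ in the definition of strong extremality, $y$ is strongly extremal for $\B$, which completes the argument.

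The only genuine subtlety is bookkeeping with the indexing convention: the functions $f_n$ must be built from the products $B_{z_n} \cdots B_{z_1}$ read in the same order as in the definitions of extremality, and one must check that this is compatible with the inequality $f_{n+k}(z) \leq f_n(T^k z) + f_k(z)$ demanded by Lemma~\ref{heaviness} (it is, because shifting $z$ by $k$ exposes precisely the "outer" block $B_{z_{n+k}}\cdots B_{z_{k+1}}$ of the length-$(n+k)$ product). A minor point is that $f_n$ may take the value $-\infty$ on a product that is identically zero; this is harmless since Lemma~\ref{heaviness} explicitly allows $\BR \cup \{-\infty\}$-valued functions, and in any case the extremal point $y$ will have $f_n(y) \geq n\log\varrho(\B) > -\infty$ whenever $\varrho(\B) > 0$, while the degenerate case $\varrho(\B) = 0$ can be handled separately and trivially.
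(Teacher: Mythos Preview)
Your proposal is correct and follows essentially the same route as the paper's proof: apply Lemma~\ref{heaviness} on the orbit closure $Z=\overline{\{\sigma^k x:k\ge0\}}$ with $f_n(z)=\log\|B_{z_n}\cdots B_{z_1}\|$, identify the limsup over $Z$ with $\log\varrho(\B)$ via the very weak extremality of $x$ and the definition of $\varrho(\B)$, and read off strong extremality of the recurrent $y$ together with $\mathcal L(y)\subseteq\mathcal L(x)$. The extra remarks you make about the indexing convention and the degenerate case $\varrho(\B)=0$ are sound but not needed for the argument as stated.
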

\begin{proof}
Let $Z:=\overline{\{\sigma^nx \colon n \geq 0\}}\subseteq \Sigma_m^\omega$, let $T \colon Z \to Z$ be the restriction of $\sigma$ to $Z$, and define $f_n(z):=\log\|B_{z_n}\cdots B_{z_1}\|$ for every $z=z_1 z_2 \cdots \in Z$, where we use the convention $\log 0:=-\infty$. Clearly each $f_n$ takes values in $\BR\cup\{-\infty\}$, and is continuous since it is a function of only finitely many coordinates. Note also that the hypothesis $f_{n+k}(z) \leq f_n(T^kz) + f_k(z)$ is satisfied for all $z \in Z$ and $n,k \geq 1$ by the submultiplicativity of operator norms. Using the hypothesis on $x$ together with the definition of the joint spectral radius of $\B$ we obtain
\[\log\varrho(\B) = \limsup_{n \to \infty} \frac{1}{n}\log\|B_{x_n}\cdots B_{x_1}\|\leq \limsup_{n \to \infty} \sup_{z \in Z} \frac{1}{n}\log \|B_{z_n}\cdots B_{z_1}\|\leq \log\varrho(\B).\]
It follows by Lemma~\ref{heaviness} that there exists a recurrent sequence $y \in Z$ such that
\[\inf_{n \geq 1}\frac{1}{n}\log\|B_{y_n}\cdots B_{y_1}\| = \log\varrho(\B),\]
and this equation implies that $y$ is strongly extremal for $\B$. Lastly, since $y \in \overline{\{\sigma^nx \colon n \geq 0\}}$, one may easily see that every subword of $y$ is also a subword of $x$ as required.
\end{proof}

Define
    \[ \Sigma_{2,p} = \underbrace{\Sigma_2 \times \Sigma_2 \times \cdots \times \Sigma_2}_p \]
as the Cartesian product of $p$ copies of $\Sigma_2$.
Define $\Sigma_{2,p}^\omega$ as the set of infinite words on $\Sigma_{2,p}$.
That is, \[ \x = \x_1 \x_2 \cdots \in \Sigma_{2,p}^\omega \]
    where \[ \x_i = (x_i^{(1)}, x_i^{(2)}, \ldots, x_i^{(p)}) \in \Sigma_{2,p}.\]
and each $x_i^{(j)} \in \Sigma_2$.
If we wish to specify only the infinite word coming from the $j$th coordinate, we will use the convention
    that \[ x^{(j)} = x_1^{(j)} x_2^{(j)} x_3^{(j)} \cdots \in \Sigma_2^\omega \]
With an abuse of notation, we will write
\begin{eqnarray*}
\x & = & \x_1 \x_2 \cdots \in \Sigma_{2,p}^\omega \\
   & = & x^{(1)} \times x^{(2)} \times \cdots \times x^{(p)} \in \Sigma_2^\omega \times \cdots \times
                        \Sigma_2^\omega. \\
\end{eqnarray*}
\begin{lemma}\label{p-sturm}
Let $\gamma_1,\ldots,\gamma_p \in (0,1)$ such that the set $\{1,\gamma_1,\ldots,\gamma_p\}$ is linearly independent over $\BQ$. Then every $\x \in X_{\gamma_1} \times \cdots \times X_{\gamma_p} \subset \Sigma_{2,p}^\omega$ satisfies $P(\x,n)=(n+1)^p$ for every $n \geq 1$.
\end{lemma}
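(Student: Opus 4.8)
The plan is to count the length-$n$ subwords of $\x$ by first understanding the one-dimensional Sturmian factors and then showing that the product structure forces the count to be exactly the product $(n+1)^p$. First I would recall the standard fact, cited in the excerpt, that each $x^{(j)} \in X_{\gamma_j}$ with $\gamma_j$ irrational satisfies $P(x^{(j)}, n) = n+1$ for every $n \geq 1$, and moreover that the length-$n$ factors of a Sturmian word of slope $\gamma_j$ are precisely characterised by their number of $1$'s: for each $n$ there are exactly two possible $1$-counts, say $k_j(n)$ and $k_j(n)+1$, and among the $n+1$ factors of length $n$ exactly one has the smaller count $k_j(n)$ (or, in the complementary normalisation, exactly one has the larger count) while the remaining $n$ are determined by their count together with the position pattern. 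The upper bound $P(\x, n) \leq (n+1)^p$ is immediate, since any length-$n$ factor of $\x$ is a $p$-tuple of length-$n$ factors of the $x^{(j)}$, so $\mathcal L_n(\x) \subseteq \mathcal L_n(x^{(1)}) \times \cdots \times \mathcal L_n(x^{(p)})$, a set of size $(n+1)^p$.

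The real content is the matching lower bound $P(\x, n) \geq (n+1)^p$, i.e.\ that every $p$-tuple of admissible one-dimensional factors actually occurs synchronously in $\x$. Here I would use the classical coding of Sturmian words by irrational rotations: $x^{(j)}_i = \lfloor (i+1)\gamma_j + \rho_j\rfloor - \lfloor i\gamma_j + \rho_j\rfloor$ (or the $\lceil\cdot\rceil$ variant, depending on which member of $X_{\gamma_j}$ we have) for a suitable phase $\rho_j \in \BR$. Then the length-$n$ factor of $x^{(j)}$ beginning at position $i$ is a function only of the fractional part $\{i\gamma_j + \rho_j\} \in [0,1)$, and as $t$ ranges over $[0,1)$ (minus the finitely many discontinuity points $\{-k\gamma_j : 0 \leq k \leq n\}$), the associated word takes all $n+1$ admissible values, each on a subinterval of positive length. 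The point is that the map $i \mapsto (\{i\gamma_1 + \rho_1\}, \ldots, \{i\gamma_p + \rho_p\})$ is an orbit of the rotation by $(\gamma_1, \ldots, \gamma_p)$ on the torus $\BT^p$, and since $\{1, \gamma_1, \ldots, \gamma_p\}$ is linearly independent over $\BQ$, this rotation is minimal (equivalently, the orbit is equidistributed and in particular dense). Therefore the orbit meets every product of $p$ open subintervals, one per coordinate; choosing these subintervals to be exactly the intervals on which the $j$th coordinate produces a prescribed length-$n$ factor shows that every $p$-tuple of admissible factors occurs. This gives $P(\x, n) \geq (n+1)^p$, and combined with the upper bound yields equality.

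The main obstacle I anticipate is purely bookkeeping: one must handle the finitely many "bad" starting positions $i$ for which some coordinate $\{i\gamma_j + \rho_j\}$ lands on a discontinuity point of the factor-coding map, and confirm that these do not obstruct the density argument — which is fine because each coordinate's set of admissible factors is realised on a union of genuinely open intervals (the endpoints being a null, finite set), so the product of open interiors is still a nonempty open box that the dense orbit must enter. A secondary technical point is matching the two conventional normalisations of Sturmian words (the $\lfloor\cdot\rfloor$ versus $\lceil\cdot\rceil$ codings, corresponding to whether $X_{\gamma_j}$ is defined with a strict or non-strict balance/limit condition) so that the rotation-coding description applies to exactly the elements of $X_{\gamma_j}$; this is standard and I would simply cite the relevant fact from \cite{Fogg,Loth}. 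There is also a minor subtlety in that an element of $X_{\gamma_j}$ need not be the rotation sequence itself but could be any element of its orbit closure; however, since recurrence forces it to have the same language as a Sturmian rotation word, and the language is all that matters for computing $P(\x, n)$, this causes no difficulty. Once these points are dispatched the proof is complete.
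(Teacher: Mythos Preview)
Your proposal is correct and follows essentially the same route as the paper: the upper bound is the trivial product bound, and the lower bound comes from coding each Sturmian coordinate by an irrational rotation, observing that each length-$n$ factor corresponds to a nonempty half-open interval (hence contains a nonempty open subinterval), and invoking minimality of the product rotation on $\BT^p$ under the $\BQ$-linear independence hypothesis to hit every product box. The digression in your first paragraph about the $1$-count structure of Sturmian factors is unnecessary (and not quite accurately stated), but you do not use it, so it does no harm.
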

\begin{proof}
Let $\x = x^{(1)} \times x^{(2)} \times \cdots x^{(p)} \in X_{\gamma_1} \times X_{\gamma_2} \times \cdots \times
    X_{\gamma_p} \subset \Sigma_{2,p}^\omega$.
Each sequence $x^{(j)} \in X_{\gamma_j}$, has complexity function $P(x^{(j)}, n) \equiv n+1$.
Every subword of $\x \in \Sigma_{2,p}^\omega$ of length $n$ corresponds to a $p$-tuple of subwords drawn
    respectively from $x^{(1)}, x^{(2)}, \ldots, x^{(p)} \in \Sigma_{2}^\omega$.
It follows that the number of subwords of $\x$ which have length $n$ cannot be greater than
    $\prod_{j=1}^p P(x^{(j)},n) = (n+1)^p$.
To prove that $P(\x, n)$ is precisely this amount we must show that every possible $p$-tuple of
    subwords of $x^{(1)} \times x^{(2)} \times \cdots \times x^{(p)}$ arises at some position in the
    sequence of $\x = \x_1 \x_2 \cdots$.
That is, if for each $j = 1, 2, \ldots, p$, the word $u_1^{(j)} u_2^{(j)} \cdots u_n^{(j)}$ is a
    subword of $x^{(j)}$, then there exists a $t \geq 0$ such that
    $x_{i+t}^{(j)} = u_{i}^{(j)}$ for all $i = 1, 2, \ldots, n$ and $j = 1, 2, \ldots, p$.

Let us define $T_{\gamma_j} \colon \BR/\BZ \to \BR/\BZ$ by $T_{\gamma_j}y:=y+\gamma_j$ for all $y \in \BR/\BZ$ and $j=1,\ldots,p$, and define $T_{\gamma_1,\ldots,\gamma_p} \colon (\BR/\BZ)^p \to (\BR/\BZ)^p$ by $T_{\gamma_1,\ldots,\gamma_p}:=T_{\gamma_1}\times \cdots \times T_{\gamma_p}$.
Since the set $\{1,\gamma_1,\ldots,\gamma_p\}$ is $\BQ$-linearly independent, the map $T_{\gamma_1,\ldots,\gamma_p}$ is a minimal transformation of the $p$-torus: that is, for every $(z_1,\ldots,z_p) \in (\BR/\BZ)^p$ and nonempty open set $V \subseteq (\BR/\BZ)^p$, there exists $N \in \BN$ such that $T_{\gamma_1,\ldots,\gamma_p}^N(z_1,\ldots,z_p) \in V$.
For a proof of this statement see e.g. \cite{KH}.
Now, since each $x^{(j)} \in X_{\gamma_j}$ is Sturmian, for each $j=1,\ldots,p$ there exists $z_j \in \BR/\BZ$ such that either
\[
x^{(j)}_i = \lfloor (i+1)\gamma_j +z_j\rfloor - \lfloor i\gamma_j +z_j\rfloor =\chi_{[1-\gamma_j,1)}(T^i_{\gamma_j}z_j)
\]
for all $i \geq 1$, or
\[
x^{(j)}_i \equiv \lceil (i+1)\gamma_j +z_j\rceil - \lceil i\gamma_j +z_j\rceil =\chi_{(1-\gamma_j,1]}
(T^i_{\gamma_j}z_k)
\]
for all $i \geq 1$ -- see for example \cite{AS03}. Here $\chi$ is the characteristic function define in the standard way as $\chi_A(z) = 1$ if $z \in A$  and $0$ otherwise.

In either case $\BR/\BZ$ decomposes into two disjoint half-open intervals $J^0_j, J^1_j$ with the property that for every $i \geq 1$, $x^{(j)}_i=1$ if and only if $T_{\gamma_j}^iz_j \in J^1_j$, and $x^{(j)}_i=0$ if and only if $T_{\gamma_j}^iz_j \in J^0_j$.
It follows that for any word $u_1 u_2 \cdots u_n$ over the alphabet $\{0,1\}$, we have $x^{(j)}_{\ell+i}=u_i$ for each $i=1,\ldots,n$ if and only if $T^\ell_{\gamma_j}z_j \in T_{\gamma_j}^{-1} J_j^{u_1} \cap \cdots \cap T_{\gamma_j}^{-n}J_k^{u_n}$ for some integer $\ell \geq 0$.

We may now show that $P(\x,n)=(n+1)^p$ for every $n \geq 1$.
Fix $n \geq 1$ and suppose that for each $j=1,\ldots,p$, we have $u^{(j)}_1 \cdots u^{(j)}_n$ is a word belonging to the language of $x^{(j)}$.
For each $j=1,\ldots,p$ it follows that the half-open interval $ T_{\gamma_j}^{-1}J_j^{u_1^{(j)}} \cap \cdots \cap T_{\gamma_j}^{-n}J_j^{u_n^{(j)}}$ is nonempty, and hence contains a nonempty open subset $U_j$.
Since $T_{\gamma_1,\ldots,\gamma_p}$ is minimal, there exists $\ell \geq 0$ such that $T_{\gamma_1,\ldots,\gamma_p}^\ell(z_1,\ldots,z_p)$ belongs to the open set $U_1 \times \cdots \times U_p \subset (\BR/\BZ)^p$.
We thus have $T^\ell_{\gamma_j}z_j \in U_j$ for each $j$ and therefore $x^{(j)}_{\ell+i}=u_i^{(j)}$ for $j=1,\ldots,p$ and $i=1,\ldots,n$.
The product word $(u^{(1)}_1, \ldots, u^{(p)}_1)(u^{(1)}_2, \ldots, u^{(p)}_2)\cdots(u^{(1)}_n,\ldots,u^{(p)}_n)$
therefore belongs to the language of $\x$ as claimed.
We conclude that $\#\mathcal L_n(\x)=\prod_{j=1}^p \# \mathcal L_n(x^{(j)})$ for every $n \geq 1$ and therefore $P(\x,n) =(n+1)^p$ as required.
\end{proof}

%%%%%%%%
The following standard result may be found in e.g. \cite[p.24]{Jungers}:
\begin{lemma}\label{boundedness}
Let $\B$ be a bounded nonempty set of $d \times d$ real matrices, and suppose that the only subspaces of $\BR^d$ which are preserved by all elements of $\B$ are $\BR^d$ and $\{0\}$. Then there exists a constant $K>0$ such that
\[\sup\{\|B_{i_n} \cdots B_{i_1}\| \colon B_i \in \B\} \leq K\varrho(\B)^n\]
for every integer $n \geq 1$.
\end{lemma}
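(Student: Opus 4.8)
The plan is to deduce the statement from the existence of an \emph{extremal norm} for $\B$: a norm $N$ on $\BR^d$ satisfying $N(Bv)\le\varrho(\B)N(v)$ for all $B\in\B$ and $v\in\BR^d$. Once such an $N$ is in hand, iterating this inequality gives $N(B_{i_n}\cdots B_{i_1}v)\le\varrho(\B)^nN(v)$, and since all norms on $\BR^d$ are equivalent, $\|B_{i_n}\cdots B_{i_1}\|\le K\varrho(\B)^n$ with $K$ the comparison constant between $N$ and $\|\cdot\|$. Before constructing $N$ I would clear away the case $\varrho(\B)=0$: then every finite product $Q$ of elements of $\B$ satisfies $\|Q^j\|^{1/j}\to0$ as $j\to\infty$, hence is nilpotent, so the multiplicative semigroup generated by $\B$ consists of nilpotent matrices and is therefore simultaneously triangularizable; for $d\ge2$ this produces a proper nonzero common invariant subspace, contradicting the hypothesis, while for $d=1$ the condition $\varrho(\B)=0$ simply means $\B=\{0\}$ and the statement is trivial. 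So we may assume $\rho:=\varrho(\B)>0$.

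For $v\in\BR^d$ set $N(v):=\sup\{\rho^{-k}\|B_{i_k}\cdots B_{i_1}v\|\colon k\ge0,\ B_{i_1},\dots,B_{i_k}\in\B\}$, where the term $k=0$ is read as $\|v\|$; thus $\|v\|\le N(v)\le+\infty$. As a supremum of seminorms, $N$ is convex and positively homogeneous, and a direct check shows that $V:=\{v\in\BR^d\colon N(v)<\infty\}$ is a linear subspace and that $N(Bv)\le\rho N(v)$ for every $B\in\B$, so that $V$ is preserved by every element of $\B$. By the hypothesis on $\B$, either $V=\{0\}$ or $V=\BR^d$. If $V=\BR^d$ then $N$ is a real-valued convex function on $\BR^d$, hence continuous, hence bounded on the Euclidean unit sphere by some $C>0$; combined with $N\ge\|\cdot\|$ and the iterated inequality $N(B_{i_n}\cdots B_{i_1}v)\le\rho^nN(v)$ this yields $\|B_{i_n}\cdots B_{i_1}\|\le C\rho^n$ for all products, which is the lemma with $K=C$.

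It remains to rule out the possibility $V=\{0\}$, i.e.\ that $N\equiv+\infty$ off the origin, and I expect this to be the main obstacle, since it is exactly here that the irreducibility hypothesis must be turned into a quantitative estimate. Fix any $M>1$. For $n\ge1$ let $O_n$ be the set of unit vectors $v$ admitting a product $P$ of length between $1$ and $n$ with $\|Pv\|>M\rho^{|P|}$. Each $O_n$ is open, the family $(O_n)$ is increasing, and the assumption $N\equiv+\infty$ forces $\bigcup_nO_n$ to be the whole unit sphere; by compactness, $O_{n_0}$ is already the entire sphere for some fixed $n_0$. Thus every unit vector can be sent to norm exceeding $M\rho^{|P|}$ by a product $P$ of length at most $n_0$; normalising the image and iterating this $k$ times from an arbitrary unit vector yields a product of some length $L_k$ with $k\le L_k\le kn_0$ whose Euclidean operator norm exceeds $M^k\rho^{L_k}$. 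Writing $a_n:=\sup\{\|B_{i_1}\cdots B_{i_n}\|\colon B_{i_j}\in\B\}$, this gives $a_{L_k}^{1/L_k}>M^{k/L_k}\rho\ge M^{1/n_0}\rho>\rho$ for every $k$; as $L_k\to\infty$, this contradicts $\varrho(\B)=\lim_na_n^{1/n}=\rho$. Hence $V=\BR^d$, and the proof is complete. The crux is thus the covering-and-iteration argument of this last paragraph: the open sets $O_n$ exhaust the compact sphere, so finitely many products suffice uniformly, and the self-feeding iteration then manufactures exponential growth faster than $\varrho(\B)$, which is the sought contradiction.
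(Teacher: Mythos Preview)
Your argument is correct. The paper does not actually prove this lemma: it simply records it as a standard fact and cites \cite[p.~24]{Jungers}. What you have written is essentially the classical extremal-norm proof that one finds in that reference (and in the Barabanov/Rota--Strang circle of ideas): one defines $N(v)=\sup_{k\ge0}\varrho(\B)^{-k}\|B_{i_k}\cdots B_{i_1}v\|$, observes that its finiteness locus is a $\B$-invariant subspace, and uses irreducibility to force $N$ to be finite everywhere. Your treatment of the two edge cases is also sound: Levitzki's theorem disposes of $\varrho(\B)=0$ in dimension $d\ge2$, and your covering-and-iteration argument on the unit sphere correctly rules out $N\equiv+\infty$ by producing products whose normalized growth exceeds $\varrho(\B)$. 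So there is nothing to compare against in the paper's text; you have supplied the standard proof where the paper defers to the literature.
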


We may now prove Proposition~\ref{main2}. We will construct a set $\D$ of matrices indexed over $\Sigma_{2,p}$, which differs from the statement of the proposition only in its notation. Let us choose $\gamma_1,\ldots,\gamma_p \in (0,\frac{1}{2})$ such that $\{1,\gamma_1,\ldots,\gamma_p\}$ is linearly independent over $\BQ$, and choose $\alpha_1,\ldots,\alpha_p \in (0,1)$ such that $\mathfrak{r}(\alpha_j)=\gamma_j$ for every $j=1,\ldots,p$. (Once each $\gamma_i$ has been specified, the choice of $\alpha_i$ is in fact unique.) Define $B_i^{(j)}:=A_i^{(\alpha_j)}$ for $j=1,\ldots,p$ and $i=0,1$ where $A^{(\alpha)}_i$ is as defined in \eqref{btvmat}, and let
\[
\D := \left\{D_{(x^{(1)}, \ldots, x^{(p)})}
         = \bigotimes_{j=1}^p B_{x^{(j)}}^{(j)} \colon (x^{(1)}, \cdots, x^{(p)}) \in \Sigma_{2,p}\right\},
\]
where $\otimes$ denotes the Kronecker product. Recall from e.g. \cite{HJ2} that if $G_1,\ldots,G_p$ and $H_1,\ldots,H_p$ are $d \times d$ matrices, then
\[\left(\bigotimes_{j=1}^p G_j\right)\left(\bigotimes_{j=1}^p H_j\right) = \bigotimes_{j=1}^p \left(G_jH_j\right)\]
and
\[
\left(\bigotimes_{j=1}^p G_j\right)^T =\left(\bigotimes_{j=1}^p G_j^T\right),\qquad \rho\left(\bigotimes_{j=1}^p G_j\right) =\prod_{j=1}^p\rho(G_j),
\]
where $\rho$ stands for the spectral radius of a matrix. Using these relations together with the identity $\|G\|=\sqrt{\rho(G^TG)}$ one may easily derive the useful identity
\[
\left\|\bigotimes_{j=1}^p G_j\right\| =\prod_{j=1}^p\|G_j\|
\]
for arbitrary $d \times d$ matrices $G_j$. We may thus compute
\begin{align*}
\varrho(\D)&=\lim_{n \to \infty}\max\left\{ \left\|D_{\x_n} \cdots D_{\x_1}\right\|^{1/n} \colon \x \in \Sigma_{2,p}^\omega \right\}\\
&=\lim_{n \to \infty}\max\left\{\prod_{j=1}^p\left\|B^{(j)}_{x_n^{(j)}}\cdots B^{(j)}_{x^{(j)}_{1}}\right\|^{1/n}\colon \x \in \Sigma_{2,p}^\omega\right\}\\
&=\lim_{n \to \infty} \left(\prod_{j=1}^{p} \left(\max\left\{\left\|B^{(j)}_{x^{(j)}_n} \cdots B^{(j)}_{x^{(j)}_1}\right\|^{1/n} \colon x_{i}^{(j)} \in \{0,1\}\right\}\right)\right)\\
&=\prod_{j=1}^n \varrho\left(\B^{(j)}\right).
\end{align*}
Now let us suppose that $\x \in \Sigma_{2,p}^\omega$ is very weakly extremal for $\D$. By Lemma~\ref{strong} we may find a recurrent sequence $\y \in \Sigma_{2,p}^\omega$  which is strongly extremal for $\D$ and satisfies $\mathcal L(\y)\subseteq\mathcal L(\x)$. For nonzero $\alpha$ the matrices in \eqref{btvmat} do not admit a common invariant subspace other than $\{0\}$ and $\BR^2$, and it follows from Lemma~\ref{boundedness} that there exists a constant $K>0$ such that
\[\max_{1 \leq j \leq p} \sup_{n \geq 1} \frac{1}{\varrho\left(\B^{(j)}\right)^{n}} \left\|B_{y_n^{(j)}}^{(j)} \cdots B_{y_1^{(j)}}^{(j)}\right\| \leq K.\]
Since $\y$ is strongly extremal for $\D$, it follows that
\begin{align*}
0&<\inf_{n \geq 1} \varrho(\D)^{-n}\|D_{\y_n} \cdots D_{\y_1}\|\\
&=\inf_{n \geq 1} \prod_{j=1}^p\left(\varrho\left(\B^{(j)}\right)^{-n} \left\|B_{y_{n}^{(j)}}^{(j)} \cdots B_{y_{1}^{(j)}}^{(j)}\right\|\right)\\
&\leq K^{p-1}\cdot\inf_{n \geq 1} \varrho\left(\B^{(j)}\right)^{-n}\left\|B_{y_{n}^{(j)}}^{(j)} \cdots B_{y_{1}^{(j)}}^{(j)}\right\|,
\end{align*}
for each $j=1,\ldots,p$, and therefore $y^{(j)}$ is strongly extremal for $\B^{(j)}$ for every $j=1,\ldots,p$. Since $\y$ is recurrent, each $y^{(j)}$ is clearly also recurrent, and it follows from Theorem~\ref{sturm} that $y^{(j)} \in X_{\gamma_j}$ for every $j$. We conclude that $\y \in X_{\gamma_1} \times \cdots \times X_{\gamma_p}$ and therefore $P(\y,n)=(n+1)^p$ for all $n \geq 1$ by Lemma~\ref{p-sturm}, which implies that $P(\x,n) \geq (n+1)^p$ as claimed.

To complete the proof of the proposition we must show that there exists a sequence $\z \in \Sigma_{2,p}^\omega$ which is strongly extremal for $\D$ and satisfies $P(\z,n)\equiv (n+1)^p$. Let $\z \in X_{\gamma_1} \times \cdots \times X_{\gamma_p}$ and write $\z_i=(z^{(1)}_i,\ldots,z^{(p)}_i)$ for every $i \geq 1$. By Lemma~\ref{p-sturm} we have $P(\z,n) \equiv (n+1)^p$, and since each sequence $z_1^{(j)} z_2^{(j)} \cdots$ belongs to $X_{\gamma_j}$ it follows from Theorem~\ref{sturm} that
\[\inf_{n \geq 1} \varrho\left(\B^{(j)}\right)^{-n}\left\|B^{(j)}_{z^{(j)}_n} \cdots B^{(j)}_{z^{(j)}_1}\right\|>0\]
for $j=1,\ldots,p$. By taking the product over $j=1,\ldots,p$ of this expression we deduce that $\inf_{n \geq 1} \varrho(\D)^{-n}\|D_{\z_n} \cdots D_{\z_1}\|>0$ and therefore $\z$ is strongly extremal for $\D$ as required. The proof is complete.

\section{Reduction to pairs of matrices}
In this section we prove the following result, which may be viewed as an extension of Theorem~3 of \cite{JungersBlondel08}:

\begin{proposition}\label{squidge}
Let $\A:=\{A_0,\ldots,A_{m-1}\}$ be a set of $d \times d$ real
matrices with nonzero joint spectral radius. Then there exists a pair
$\B:=\{B_0,B_1\}$ of real square matrices of dimension $(2m-1)d$ with
the following properties:
\begin{enumerate}
\item
For every $x \in \Sigma_2^\omega$ which is very weakly extremal for $\B$,
we may find $z \in \Sigma_m^\omega$ which is strongly extremal for
$\A$ and satisfies $P(z,\lfloor n/m \rfloor) \leq P(x,n)$ for all $n
\geq 1$.
\item
For every $z \in \Sigma_m^\omega$ which is very weakly extremal for
$\mathcal{A}$, we may find $x \in \Sigma_2^\omega$ which is strongly
extremal for $\B$ and satisfies $P(x,n) \leq mP(z,\lceil n/m
\rceil+1)$ for all $n \geq 1$.
\end{enumerate}
\end{proposition}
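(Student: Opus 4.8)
The plan is to mimic the Jungers--Blondel encoding of an $m$-letter alphabet by a $2$-letter alphabet, but to track the growth of partial products carefully enough that extremal sequences on one side are matched with strongly extremal sequences on the other. Write $\rho:=\varrho(\A)>0$. After rescaling we may assume $\rho=1$, since replacing $\A$ by $\{\rho^{-1}A_i\}$ and $\B$ by a suitable rescaling changes nothing of substance. The key idea is to let the letter $1$ of $\B$ ``commit'' to a block of length $m$: an occurrence of $B_1 B_0^{k} B_1$ with $0\le k\le m-1$ will encode the letter $k$ of $\A$, while the matrix $B_0$ in isolation acts as a shift/delay on an auxiliary $(2m-1)$-block of coordinates. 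Concretely, split $\BR^{(2m-1)d}$ as a direct sum of $2m-1$ copies of $\BR^d$, indexed by positions in a cyclic register of length $2m-1$; let $B_0$ move the register one step and act as the identity (or a nilpotent shift, padded with zeros) on the $\BR^d$-blocks, and let $B_1$, when the register is in a distinguished ``read'' position, apply the appropriate $A_k$ determined by the number of $B_0$'s since the previous $B_1$, and otherwise act trivially. One designs the blocks so that along any infinite word $x\in\Sigma_2^\omega$, after deleting an initial segment of bounded length, the product $B_{x_n}\cdots B_{x_1}$ has norm comparable (up to a multiplicative constant independent of $n$) to $\|A_{z_{\lfloor n/m\rfloor}}\cdots A_{z_1}\|$, where $z\in\Sigma_m^\omega$ is the $\A$-word read off from the block structure of $x$, with the convention that a run of $B_0$'s of length $>m-1$ is truncated (this is what forces $\varrho(\B)=1=\varrho(\A)$, using Lemma~\ref{boundedness} applied to $\A$ to bound the truncated contributions).

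With this dictionary in hand, the two items are proved as follows. For (i): given $x$ very weakly extremal for $\B$, first apply Lemma~\ref{strong} to replace $x$ by a recurrent, strongly extremal $y$ with $\Lk(y)\subseteq\Lk(x)$; recurrence guarantees that $y$ genuinely decomposes into infinitely many length-$\le m$ blocks (it cannot be eventually all $0$'s, as that would make the partial products bounded, contradicting $\varrho(\B)=1>0$ and strong extremality). Reading off the $\A$-word $z$ from the blocks of $y$, the comparability estimate turns strong extremality of $y$ for $\B$ into strong extremality of $z$ for $\A$. For the complexity bound: each length-$n$ window of $y$ determines, and is almost determined by, the length-$\lfloor n/m\rfloor$ window of $z$ it covers together with a bounded amount of ``phase'' information; a direct combinatorial count shows the map from $\Lk_n(y)$ onto (a superset of) $\Lk_{\lfloor n/m\rfloor}(z)$ is such that $P(z,\lfloor n/m\rfloor)\le P(y,n)\le P(x,n)$. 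For (ii): given $z$ very weakly extremal for $\A$, again pass to a recurrent strongly extremal $w$ with $\Lk(w)\subseteq\Lk(z)$ via Lemma~\ref{strong}, then \emph{encode} $w$ into $x:=\cdots B_1 B_0^{w_2} B_1 B_0^{w_1}B_1\cdots$ (interpreting letter $k$ as the block $B_1 B_0^{k}$, so that a length-$\ell$ prefix of $w$ becomes a $\B$-word of length between $\ell+1$ and $m\ell+1$). The comparability estimate gives strong extremality of $x$ for $\B$. For the complexity: a length-$n$ window of $x$ is covered by at most $\lceil n/m\rceil+1$ consecutive blocks, i.e. by a subword of $w$ of length $\le \lceil n/m\rceil+1$, together with one of at most $m$ possible offsets indicating where the window starts inside the first block; hence $P(x,n)\le m\,P(w,\lceil n/m\rceil+1)\le m\,P(z,\lceil n/m\rceil+1)$.

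The main obstacle is making the comparability estimate fully rigorous in both directions simultaneously: one must choose the entries of $B_0$ and $B_1$ so that (a) a run of $B_0$'s longer than $m-1$ does \emph{not} allow the norm to grow faster than $\varrho(\A)^{\text{(effective length)}}$ — which is why the auxiliary register has length $2m-1$ rather than $m$, giving enough ``slack'' to absorb and reset an over-long run without spurious growth — and (b) the contributions of the non-$\A$ blocks (the shifts on the register) are uniformly bounded above \emph{and} do not destroy the lower bound needed for strong extremality. This is exactly the place where Jungers and Blondel only needed the periodic case and where, as the authors note, a more delicate argument is required: one cannot simply quote their construction, but must verify the two-sided norm comparison $c^{-1}\|A_{z_{\lfloor n/m\rfloor}}\cdots A_{z_1}\|\le \|B_{x_n}\cdots B_{x_1}\|\le c\,\|A_{z_{\lceil n/m\rceil}}\cdots A_{z_1}\|$ for a constant $c$ depending only on $\A$ and $m$, uniformly over all admissible $x$ and over all $n$. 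Once this estimate is established, combined with Lemma~\ref{strong}, Lemma~\ref{boundedness}, and the elementary window-counting above, the two items follow routinely, and the stated dimension $(2m-1)d$ is exactly what the register-of-length-$(2m-1)$ construction produces.
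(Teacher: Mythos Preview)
Your encoding choice breaks part (ii). You encode letter $k$ by the \emph{variable-length} block $B_1B_0^k$ and then claim that a length-$n$ window of $x$ is covered by at most $\lceil n/m\rceil+1$ consecutive blocks. This is false: if $w_i\equiv 0$ then $x=111\cdots$, every block has length $1$, and a length-$n$ window meets $n$ blocks, so the bound $P(x,n)\le mP(z,\lceil n/m\rceil+1)$ cannot follow. The paper instead uses the \emph{fixed-length} encoding $k\mapsto B_0^{k}B_1B_0^{m-1-k}$: here $B_0$ is the nilpotent downshift on $\bigoplus_{i=0}^{2m-2}\BR^d$ (not a cyclic register) and $B_1$ applies \emph{all} of $A_0,\ldots,A_{m-1}$ in parallel to the first $m$ summands, landing in the last $m$. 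With every block of length exactly $m$, a length-$n$ window of $x$ is contained in at most $\lceil n/m\rceil+1$ blocks, and the pigeonhole count gives the stated inequality.

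For (i) you assert, but do not prove, both that a recurrent strongly extremal $y$ for $\B$ must carry the block structure and that a uniform two-sided comparability $c^{-1}\|A_{z_{\lfloor n/m\rfloor}}\cdots A_{z_1}\|\le\|B_{x_n}\cdots B_{x_1}\|\le c\|A_{z_{\lceil n/m\rceil}}\cdots A_{z_1}\|$ holds. The paper does not prove any such inequality and proceeds differently. After a bounded shift (Lemma~\ref{scooge}) one obtains $y=\sigma^kx$ with $B_{y_n}\cdots B_{y_1}V_{m-1}\neq\{0\}$ for all $n$; a direct computation (Lemma~\ref{snurk}) then forces each consecutive $m$-block of $y$ to contain exactly one symbol $1$, which is what yields a well-defined decoded sequence $w\in\Sigma_m^\omega$. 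The identity \eqref{feqt} expresses $\|B_{y_{mn}}\cdots B_{y_1}\|$ as a maximum over $2m-1$ \emph{shifted} $\A$-products $\|A_{w_n+i-m+1}\cdots A_{w_1+i-m+1}\|$, and only one such shift $\varpi$ is shown to be very weakly extremal for $\A$; a final application of Lemma~\ref{strong} upgrades $\varpi$ to a strongly extremal $z$. Your hoped-for uniform comparability would have bypassed all of this, but it is not what the Jungers--Blondel construction delivers, and your sketch gives no indication of how to prove it.
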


For the remainder of this subsection we fix matrices $A_0,\ldots,A_{m-1}$ such that the hypotheses of Proposition~\ref{squidge} are satisfied, and work towards the proof of the proposition. To construct the pair $B_0,B_1$ we use the following definition, which was first given in \cite{JungersBlondel08}.

 Let $V$ be the direct sum of $2m-1$ pairwise orthogonal copies of $\BR^d$, which we write as $V=\bigoplus_{i=0}^{2m-2}\BR^d$. For $i=0,\ldots,2m-2$ we shall use the notation $V_i \subseteq V$ to refer to the $d$-dimensional subspace of $\bigoplus_{i=0}^{2m-2}\BR^d$ consisting of those vectors which are zero in every co-ordinate of the direct sum except possibly the $i^{\mathrm{th}}$. We define two linear endomorphisms of $V$ by
\[B_0\left(v_0\oplus \cdots \oplus v_{2m-2}\right):=v_1 \oplus \cdots \oplus v_{2m-2} \oplus 0,\]
\[B_1\left(v_0 \oplus \cdots \oplus v_{2m-2}\right):= 0 \oplus \cdots \oplus 0 \oplus A_0v_0 \oplus A_1v_1 \oplus \cdots \oplus A_{m-1}v_{m-1}.\]
Thus $B_0$ is a surjection from $V$ onto $\bigoplus_{i=0}^{2m-3}V_i$ with kernel $V_0$, whilst $B_1$ maps $V$ into $\bigoplus_{i=m-1}^{2m-2}V_i$ with the subspace $\bigoplus_{i=0}^{m-2}V_i$ being included in its kernel.

The following lemma may be found in \cite{JungersBlondel08}; we provide a proof for the reader's convenience.
\begin{lemma}\label{squadge}
The joint spectral radius of $\B$ satisfies $\varrho(\B) \geq \varrho(\A)^{1/m}$.
\end{lemma}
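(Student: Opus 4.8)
The idea is to exhibit, for each $n$, a product of the $B_i$'s of length roughly $mn$ whose norm is bounded below in terms of the norm of an arbitrary product $A_{i_n}\cdots A_{i_1}$. The building block is a ``cycle'' word of length $m$ in the alphabet $\{0,1\}$: given an index $i\in\{0,\ldots,m-1\}$, consider the word $w_i := 0^i 1 0^{m-1-i}$, i.e. $i$ copies of $0$, then a single $1$, then $m-1-i$ copies of $0$. I would first compute the action of $B_{w_i} := B_0^{\,m-1-i} B_1 B_0^{\,i}$ on $V$. Starting from a vector supported on $V_0$, applying $B_0^{\,i}$ shifts it down to $V_0$ again after first pushing it up — more precisely one checks that $B_0^{\,i}$ sends the copy of $v\in\BR^d$ sitting in $V_i$ to the copy of $v$ in $V_0$, so applying it to something in $V_0$ requires care: the cleaner statement is that $B_0^{\,i}(V_i)=V_0$ acting as the identity on the $\BR^d$-coordinate, and more generally $B_0$ maps $V_{k}$ isometrically onto $V_{k-1}$ for $k\ge 1$ and kills $V_0$. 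Then $B_1$ maps $V_i$ to $V_{m-1+i}$ via the matrix $A_i$. So I would instead track a vector sitting in $V_{m-1}$: applying $B_0^{\,i}$ moves it to $V_{m-1-i}$, then $B_1$ cannot be applied usefully unless $m-1-i \le m-1$, which always holds, sending it to $V_{2m-2-i}$ via $A_{m-1-i}$; a final $B_0^{\,m-1-i}$ brings it back to $V_{m-1}$. Thus with the relabelling $j=m-1-i$, the length-$m$ word $u_j := 0^{\,m-1-j} 1 0^{\,j}$ induces on the subspace $V_{m-1}\cong\BR^d$ exactly the map $A_j$. (I would fix the exact exponents by direct computation from the displayed formulas for $B_0,B_1$; the one subtlety is making sure the single application of $B_1$ lands in a coordinate that the subsequent $B_0$'s can return to $V_{m-1}$, which forces the cycle length to be exactly $2m-1$ coordinates and hence $m$ letters.)

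Granting this, for any word $i_1 i_2\cdots i_n$ over $\{0,\ldots,m-1\}$, the concatenation $u_{i_n} u_{i_{n-1}} \cdots u_{i_1}$ is a word of length $mn$ over $\{0,1\}$, and the corresponding product $B_{u_{i_n}}\cdots B_{u_{i_1}}$ restricted to $V_{m-1}$ equals $A_{i_n}\cdots A_{i_1}$ (composed in the same order), since each block acts as the corresponding $A$ on $V_{m-1}$ and returns to $V_{m-1}$. Using the identity $\|BP\|\ge \|B|_W\|$ for the orthogonal projection $P$ onto a subspace $W$ (here $W=V_{m-1}$, and the copies are pairwise orthogonal by construction), we get
\[
\big\|B_{u_{i_n}}\cdots B_{u_{i_1}}\big\| \;\ge\; \big\|A_{i_n}\cdots A_{i_1}\big\|.
\]
Taking the maximum over all length-$n$ words $i_1\cdots i_n$, raising to the power $1/(mn)$, and letting $n\to\infty$ gives
\[
\varrho(\B) \;=\; \lim_{N\to\infty}\max\big\{\|B_{w}\|^{1/N}\big\} \;\ge\; \lim_{n\to\infty}\Big(\max\big\{\|A_{i_n}\cdots A_{i_1}\|\big\}\Big)^{1/(mn)} \;=\; \varrho(\A)^{1/m},
\]
where the first equality uses that $\varrho(\B)$ may be computed along the subsequence $N=mn$ (the limit in the definition of the joint spectral radius exists, or one uses that $\varrho(\B)^{mn}$ is comparable to the max over length-$mn$ products up to subexponential factors).

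The main obstacle is purely bookkeeping: getting the exponents in $u_j = 0^{a}1 0^{b}$ exactly right so that a single occurrence of the letter $1$ suffices to cycle once through the ``$A_j$ stage'' and return precisely to the starting coordinate $V_{m-1}$, and verifying that no information is lost (i.e. the block really acts as $A_j$ and not as $0$) — this is where the specific shape of $B_0$ (shift-down, killing $V_0$) and $B_1$ (apply $A_i$ on $V_i$ and shift up by $m-1$) must be used in a coordinated way, and it is the reason the ambient dimension is $(2m-1)d$ rather than something smaller. Everything after that is a one-line norm inequality together with the standard fact that $\varrho$ is insensitive to passing to the subsequence of lengths divisible by $m$.
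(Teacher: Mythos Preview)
Your approach is correct and essentially identical to the paper's: both track a vector in $V_{m-1}$, verify that the length-$m$ block $B_0^{\,j}B_1B_0^{\,m-1-j}$ acts on $V_{m-1}$ as $A_j$, concatenate $n$ such blocks, and conclude via the norm of the restriction and the subsequence $N=mn$. Your hedging about the exponents is unnecessary---your computation $B_0^{\,m-1-i}B_1B_0^{\,i}|_{V_{m-1}}=A_{m-1-i}$ is already exactly right, and after the relabelling $j=m-1-i$ it matches the paper's displayed identity $\bigl(B_0^{x_j}B_1B_0^{m-1-x_j}\bigr)v = A_{x_j}v$ on $V_{m-1}$.
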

\begin{proof}
Let $n \geq 1$ and $x_1,\ldots,x_n \in \{0,\ldots,m-1\}$ be arbitrary, and let $u:=0 \oplus \cdots \oplus 0 \oplus v \oplus 0 \oplus \cdots \oplus 0 \in V_{m-1}$. A simple calculation using the definition of $B_0,B_1$ shows that
\[
\bigl(B_0^{x_j}B_1B_0^{m-1-x_j}\bigr) v= 0 \oplus \cdots \oplus 0 \oplus A_{x_j}v \oplus 0 \oplus \cdots \oplus 0 \in V_{m-1}
\]
for any $j$. Hence the vector
\[
\left(B_0^{x_n}B_1B_0^{m-1-x_n}\right)\cdots \left(B_0^{x_2}B_1B_0^{m-1-x_2}\right) \left(B_0^{x_1}B_1B_0^{m-1-x_1}\right)v
\]
is equal to the vector
\[0 \oplus \cdots \oplus 0 \oplus \bigl(A_{x_n}\cdots A_{x_1}v \bigr) \oplus 0 \oplus \cdots \oplus 0 \in V_{m-1}.\]
It follows immediately that
\begin{align*}\varrho(\B) &\geq \limsup_{n \to \infty}\max\left\{\|B_{i_{mn}}\cdots B_{i_1}\|^{1/mn}\colon i_j \in \{0,1\}\right\}\\
&\geq \limsup_{n \to \infty}\max\left\{\|A_{i_{n}}\cdots A_{i_1}\|^{1/mn}\colon i_j \in \{0,\ldots,m-1\}\right\}\\&=\varrho(\A)^{1/m}\end{align*}
as required.
\end{proof}

The remainder of the proof of Proposition~\ref{squidge} deviates entirely from \cite{JungersBlondel08}. We next prove:

\begin{lemma}\label{scooge}
Let $x \in \Sigma_2^\omega$ be very weakly extremal for $\B$. Then there exists an integer $k \geq 0$ such that $y:=\sigma^kx$ is very weakly extremal for $\B$, and $B_{y_n}\cdots B_{y_1}V_{m-1} \neq \{0\}$ for every $n \geq 1$.
\end{lemma}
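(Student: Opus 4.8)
The plan is to analyze how products $B_{y_n}\cdots B_{y_1}$ act on the subspace $V_{m-1}$, and to show that if no shift of $x$ had the desired non-vanishing property, then the exponential growth rate of products along $x$ would be strictly smaller than $\varrho(\B)$, contradicting very weak extremality. First I would observe that, because $B_0$ and $B_1$ are both built from shifts and block maps on the $2m-1$ summands, the image $B_{y_n}\cdots B_{y_1}V_{m-1}$ is always contained in a single summand $V_i$ (or is $\{0\}$): applying $B_0$ moves the active summand index down by one, applying $B_1$ sends $V_i$ to $V_{m-1+i}$ for $i \le m-1$ and kills it otherwise. So starting from $V_{m-1}$, the position index performs a deterministic walk governed by the symbols $y_1,y_2,\ldots$, and the accompanying matrix coefficient is a product of $A_j$'s — but the product collapses to $0$ precisely when a $B_1$ is applied at a moment when the index exceeds $m-1$, or when a $B_0$ is applied at index $0$.

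Next I would make the key reduction: the claim is that \emph{some} shift $y=\sigma^k x$ keeps the $V_{m-1}$-orbit alive forever. Suppose for contradiction that for every $k\ge 0$ there is some $n=n(k)\ge1$ with $B_{(\sigma^kx)_n}\cdots B_{(\sigma^kx)_1}V_{m-1}=\{0\}$. The idea is that once the $V_{m-1}$-orbit dies at some finite stage starting from position $k$, the full product $B_{x_N}\cdots B_{x_1}$ restricted to any of the "productive" parts of $V$ can only contribute the growth it accumulated before that death plus whatever the later block can do — and I would want to decompose an arbitrary long product according to the first time (reading from the right) that the relevant orbit becomes zero, iterate this decomposition, and thereby bound $\|B_{x_N}\cdots B_{x_1}\|$ by a product of factors each of which is a bounded-length sub-product, capped by a uniform constant $K$ via Lemma~\ref{boundedness} (or directly, since each $B_i$ has operator norm controlled and the "alive" blocks feeding growth are exactly those of the form $B_0^{a}B_1B_0^{m-1-a}$ computed in Lemma~\ref{squadge}, whose norm is comparable to $\|A_a\|$). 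The upshot I am aiming for is that the non-existence of a persistently-alive shift forces $\limsup_n \|B_{x_n}\cdots B_{x_1}\|^{1/n}$ to be at most $\varrho(\A)^{1/m}$ achieved only "in pieces," but crucially \emph{not} achievable as a genuine limsup along a single orbit that keeps restarting — contradicting $\limsup_n\|B_{x_n}\cdots B_{x_1}\|^{1/n}=\varrho(\B)$ together with Lemma~\ref{squadge}'s lower bound, provided one also checks that products that never touch $V_{m-1}$ at all cannot exceed this rate either. For that last point I would note that $V=\bigoplus_i V_i$ and, by the structure of $B_0,B_1$, every vector's orbit eventually funnels through $V_{m-1}$ or dies, so the growth of $\|B_{x_n}\cdots B_{x_1}\|$ is governed by the $V_{m-1}$-orbits of the various shifts of $x$.

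The step I expect to be the main obstacle is the bookkeeping in the contradiction argument: turning "the $V_{m-1}$-orbit of every shift eventually dies" into a genuine sub-exponential-at-rate-$\varrho(\B)$ bound on $\|B_{x_n}\cdots B_{x_1}\|$. The subtlety is that the death times $n(k)$ need not be uniformly bounded in $k$, so one cannot naively chop the orbit into blocks of bounded length; instead I would argue more carefully that if the death times were unbounded, one could extract (by a compactness/diagonal argument on the shift space, as in Lemma~\ref{strong}) a limiting sequence whose $V_{m-1}$-orbit never dies, again yielding the desired $y$. Concretely, I would consider $Z=\overline{\{\sigma^k x : k\ge 0\}}$, note that "$B_{w_n}\cdots B_{w_1}V_{m-1}\ne\{0\}$" is a closed condition on $w\in Z$ for each fixed $n$, and show that the subset of $w\in Z$ whose $V_{m-1}$-orbit survives all finite times is nonempty — it is an intersection of a decreasing sequence of nonempty closed sets (nonempty because $\|B_{x_n}\cdots B_{x_1}\|$ must itself be nonzero for arbitrarily large $n$, these norms being realized through surviving $V_{m-1}$-orbits of appropriate shifts) — and then transfer very weak extremality to that limit point using $\mathcal L(\cdot)\subseteq\mathcal L(x)$ and the definition of $\varrho(\B)$ exactly as in the proof of Lemma~\ref{strong}. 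This compactness route sidesteps the uniform-bound issue and is, I expect, the cleanest way to close the argument.
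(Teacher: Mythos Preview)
Your compactness route has two genuine gaps. First, the limit point $w\in Z=\overline{\{\sigma^k x:k\ge0\}}$ obtained by intersecting the closed survival sets need not be of the form $\sigma^k x$, which is what the lemma actually asserts. Second, and more seriously, you cannot ``transfer very weak extremality'' to $w$ from $\mathcal L(w)\subseteq\mathcal L(x)$ alone: language containment gives only the upper bound $\limsup_n\|B_{w_n}\cdots B_{w_1}\|^{1/n}\le\varrho(\B)$, not the matching lower bound. A generic limit point of shifts of an extremal sequence is not itself extremal; Lemma~\ref{strong} produces \emph{some} strongly extremal point in $Z$ via Lemma~\ref{heaviness}, but does not certify extremality of a \emph{given} point in $Z$.

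The repair is already latent in your own observation that every surviving orbit ``funnels through $V_{m-1}$'': the missing ingredient is that this happens within a \emph{bounded} number of steps. From any index $j$ the index walk either dies or visits $m-1$ within at most $m-1$ steps (if $j\ge m-1$ one is forced to apply $B_0$ exactly $j-(m-1)$ times; if $j<m-1$ the first $B_1$, which must occur within $j+1$ steps to avoid death, sends the index to a value $\ge m-1$, reducing to the previous case). Hence for each large $N$ with $B_{x_N}\cdots B_{x_1}\ne0$ there is $k_N\in\{0,\ldots,m-1\}$ such that the $V_{m-1}$-orbit of $\sigma^{k_N}x$ survives at least $N-k_N$ steps; since $k_N$ takes only finitely many values and survival is monotone in $N$, pigeonhole gives a single $k\le m-1$ with the $V_{m-1}$-orbit of $\sigma^k x$ surviving forever. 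This is exactly the paper's argument, phrased there as a short case analysis on which $V_j$ has a forever-surviving orbit along $x$ (such a $j$ exists since $\varrho(\B)>0$ forces $B_{x_n}\cdots B_{x_1}\ne0$ for all $n$ on some summand). Very weak extremality of $y=\sigma^k x$ is then immediate, because shifting by a finite amount preserves it---no compactness is needed.
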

\begin{proof}
By the hypothesis of Proposition~\ref{squidge} we have $\varrho(\A)>0$, and hence $\varrho(\B)>0$ by Lemma~\ref{squadge}. Let $x = x_1 x_2 \cdots \in \Sigma_2^\omega$ be very weakly extremal. A simple calculation using submultiplicativity and the definition of $\varrho(\B)$ shows that $\sigma^rx$ is also very weakly extremal for every $r \geq 0$.

If for every $j=0,\ldots,2m-2$ there exists $n>0$ such that $B_{x_n}\cdots B_{x_1}V_j=\{0\}$, then for all large enough $n$ we have $B_{x_n}\cdots B_{x_1}=0$, which contradicts the fact that $x$ is very weakly extremal and $\varrho(\B)$ is nonzero. Let us therefore choose $V_j$ such that $B_{x_n}\cdots B_{x_1}V_j \neq \{0\}$ for every $n \geq 1$.

If $j=m-1$ then of course we may take $k:=0$ to prove the lemma. If $m-1<j \leq 2m-2$ then a simple calculation shows that $B_1B_0^rV_j=\{0\}$ for all integers $r$ such that $0 \leq r <j-m+1$. It follows that $x_i=0$ when $1 \leq i \leq j-m+1$,  since otherwise $B_{x_{j-m+1}}\cdots B_{x_1}V_j=\{0\}$ and the choice of $j$ is contradicted. We deduce that $B_{x_{j-m+1}}\cdots B_{x_1}V_j =B_0^{j-m+1}V_j\subseteq V_{m-1}$. Taking $k:=j-m+1$ yields $B_{y_n}\cdots B_{y_1}V_{m-1} \neq \{0\}$ for all $n \geq 1$, since if it were the case that $B_{y_n}\cdots B_{y_1}V_{m-1}=\{0\}$ for some $n$ then we would have $B_{x_{j-m+n+1}}\cdots B_{x_1}V_j=\{0\}$, which contradicts the definition of $j$.

Lastly let us suppose that $0 \leq j <m-1$. Since $B_0^{j+1}V_j=\{0\}$ it follows that there exists an integer $r$ such that $0 \leq r \leq j$ and $B_{x_{r+1}}\cdots B_{x_1}=B_1B_0^r$. We thus have $B_{x_{r+1}}\cdots B_{x_1}V_j \subseteq V_{j-r+m-1}$. Let $z:=\sigma^{r+1}x$. The sequence $z=z_1 z_2 \cdots$ then satisfies $B_{z_n}\cdots B_{z_1}V_{j-r+m-1} \neq \{0\}$  for all $n \geq 1$, since otherwise we would have $B_{x_{n+r+1}} \cdots B_{x_1}V_j =\{0\}$ which contradicts the choice of $j$. Since furthermore $m-1 \leq j-r+m-1 < 2m-2$, the sequence $z_1 z_2 \cdots$ falls within the scope of the arguments used in the previous paragraph, and it follows that $y:=\sigma^{j-r}z=\sigma^{j+1}x$ has the properties stipulated by the lemma. The proof is complete.
\end{proof}

\begin{lemma}\label{snurk}
Let $x=x_1 x_2 \cdots  \in \Sigma_2^\omega$ and suppose that for every $n \geq 1$ we have $B_{x_n}\cdots B_{x_1} V_{m-1} \neq \{0\}$. Then for every integer $n \geq 0$ precisely one of the symbols $x_{mn+1},\ldots,x_{m(n+1)}$ is equal to one, and the remainder are zero.
\end{lemma}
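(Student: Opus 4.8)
The plan is to track how the coordinate subspaces $V_i$ evolve under left multiplication by $B_0$ and $B_1$, exploiting the very explicit description of these two maps. First I would record the elementary facts that $B_0V_i=V_{i-1}$ for $1\le i\le 2m-2$ while $B_0V_0=\{0\}$, and that $B_1V_i\subseteq V_{m-1+i}$ for $0\le i\le m-1$ while $B_1V_i=\{0\}$ for $m\le i\le 2m-2$ (both follow at once from the definitions, noting that $B_1$ only sees the coordinates indexed $0,\dots,m-1$). Consequently, as long as a product $B_{x_N}\cdots B_{x_1}$ applied to $V_{m-1}$ remains nonzero, its image is contained in a single coordinate subspace $V_{i_N}$, and the index sequence $(i_N)$ performs a deterministic walk on $\{0,\dots,2m-2\}$: reading a $0$ sends $i\mapsto i-1$ (and kills the image if $i=0$), reading a $1$ sends $i\mapsto i+m-1$ (and kills the image if $i\ge m$). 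The hypothesis of the lemma is precisely that this walk, started at $m-1$, never dies.

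Then I would prove by induction on $n\ge 0$ the statement $H(n)$: the subspace $B_{x_{mn}}\cdots B_{x_1}V_{m-1}$ is nonzero and contained in $V_{m-1}$, where for $n=0$ the empty product is the identity so that $H(0)$ reads $V_{m-1}\subseteq V_{m-1}$. Granting $H(n)$, put $W:=B_{x_{mn}}\cdots B_{x_1}V_{m-1}\subseteq V_{m-1}$ and $W_j:=B_{x_{mn+j}}\cdots B_{x_{mn+1}}W$ for $1\le j\le m$, so that each $W_j\neq\{0\}$ by hypothesis. The inductive step splits into four points. (a) Not all of $x_{mn+1},\dots,x_{m(n+1)}$ are $0$, since otherwise $W_m\subseteq B_0^mV_{m-1}=\{0\}$. (b) If $k\in\{1,\dots,m\}$ is the position of the first $1$ in the block, then $W_{k-1}\subseteq B_0^{k-1}V_{m-1}=V_{m-k}$, hence $W_k=B_1W_{k-1}\subseteq V_{2m-1-k}$, and here $m-1\le 2m-1-k\le 2m-2$. (c) There is no second $1$: a $1$ at position $k'$ with $k<k'\le m$ would give $W_{k'-1}\subseteq B_0^{k'-1-k}V_{2m-1-k}=V_{2m-k'}$ with $m\le 2m-k'\le 2m-2$, whence $W_{k'}=B_1W_{k'-1}=\{0\}$, contradicting $W_{k'}\neq\{0\}$. (d) Therefore the block ends with exactly $m-k$ zeros and $W_m\subseteq B_0^{m-k}V_{2m-1-k}=V_{m-1}$, which with $W_m\neq\{0\}$ yields $H(n+1)$. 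Since $H(n)$ holds for all $n$, points (b)–(c) show that every block $x_{mn+1},\dots,x_{m(n+1)}$ contains exactly one symbol equal to $1$.

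The only delicate aspect is the index bookkeeping — one must repeatedly check that the relevant indices stay in the range $\{0,\dots,2m-2\}$ and keep track of the threshold $m-1$, above which $B_1$ acts as zero. The load-bearing step is (c): a second $1$ arriving before the index has descended back to $m-1$ pushes it into $\{m,\dots,2m-2\}$, where $B_1$ annihilates everything; this is exactly the mechanism that forbids two $1$'s in a block, and it is where the specific ``$2m-1$ copies of $\BR^d$'' structure of the construction is essential. Everything else is a direct computation from the definitions of $B_0$ and $B_1$.
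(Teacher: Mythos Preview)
Your proof is correct and follows essentially the same strategy as the paper's: both track how the coordinate index evolves under $B_0$ and $B_1$, show that a block of $m$ zeros or the occurrence of two $1$'s within $m$ steps forces the image of $V_{m-1}$ to vanish, and then observe that after a valid block the image lands back in $V_{m-1}$ so the argument may be iterated. The paper phrases the two-$1$'s case slightly more compactly as $B_1B_0^rB_1B_0^sV_{m-1}\subseteq B_1B_0^rV_{2m-2-s}=\{0\}$ with $r+s\le m-2$, and invokes $\sigma^m x$ for the induction rather than your explicit hypothesis $H(n)$, but the content is identical.
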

\begin{proof}
To see that exactly one of the symbols $x_1,\ldots,x_m$ equals one we argue as follows. If all of these symbols equal zero, then we have $B_{x_m}\cdots B_{x_1}V_{m-1}=B_0^mV_{m-1}=\{0\}$, contradicting the hypothesis. On the other hand, if more than one of these symbols equals one then $B_{x_m}\cdots B_{x_1} = MB_1B_0^rB_1B_0^s$ for some integers $r,s \geq 0$ with $r+s \leq m-2$ and some matrix $M$ (which may be the identity). One may easily verify from the definitions of $B_0$ and $B_1$ that in this case $B_1B_0^rB_1B_0^sV_{m-1}\subseteq B_1B_0^rV_{2m-2-s}=\{0\}$, which again contradicts the hypothesis of the lemma. We conclude that exactly one of the symbols $x_1,\ldots,x_m$ is equal to one as claimed.

A simple calculation shows that necessarily $B_{x_m}\cdots B_{x_1}V_{m-1} \subseteq V_{m-1}$, and it follows from this that the subspace $B_{x_{m+n}}\cdots B_{x_{m+1}}V_{m-1}$ is also not equal to $\{0\}$ for any $n \geq 1$. The sequence $\sigma^mx$ therefore also satisfies all of the initial hypotheses of the lemma, and by repeating the above arguments inductively we obtain the conclusion of the lemma.
\end{proof}

We may now prove Proposition~\ref{squidge}~(i). Let us suppose that $x \in \Sigma_2^\omega$ is very weakly extremal for $\B$. By Lemma~\ref{scooge}, we may find a sequence $y=\sigma^kx$ such that $B_{y_n}\cdots B_{y_1}V_m \neq \{0\}$ for every $n \geq 1$, and we clearly have $\mathcal L(y)\subseteq\mathcal L(x)$. By Lemma~\ref{snurk}, for every $n \geq 0$ exactly one of the symbols $y_{mn+1},\ldots,y_{m(n+1)}$ is equal to one. Let us define a new sequence $w\in \Sigma_m^\omega$ by setting each $w_i$ to be the unique integer $k \in \{0,\ldots,m-1\}$ such that $y_{mi-k}=1$. We thus have
\[B_{y_{mn}} \cdots B_{y_1} = \left(B_0^{w_n}B_1B_0^{m-1-w_n}\right) \cdots \left(B_0^{w_1}B_1B_0^{m-1-w_1}\right)\]
for every $n \geq 1$. The rule defining $w$ determines a bijective function from the set of all subwords of $y$ of length $mn$ beginning at positions congruent to $1$ modulo $m$ to the set of all subwords of $w$ which have length $n$, and it follows that $\#\mathcal L_{mn}(y) \geq \#\mathcal L_n(w)$ for every integer $n \geq 1$. Since the function $P(y, \cdot) \colon \BN \to \BN$ is increasing, we deduce that $P(y,n) \geq P(y,m\lfloor n/m \rfloor) \geq P(w,\lfloor n/m\rfloor)$ for every $n \geq 1$.

For convenience let us define $D_j:=B_0^jB_1B_0^{m-1-j}$ for $j=0,\ldots,m-1$, so that for each $n \geq 1$ we have
\[B_{y_{mn}} \cdots B_{y_1} = D_{w_n} \cdots D_{w_1}.\]
Define $A_j$ to be the zero matrix for all $j \in \BZ \setminus \{0,\ldots,m-1\}$. Subject to this convention we have
\[D_j\left(\bigoplus_{i=0}^{2m-2} v_i\right)=\bigoplus_{i=0}^{2m-2}A_{j+i-m+1}v_i\]
for every $j=0,\ldots,m-1$, and hence for each $n \geq 1$,

\begin{equation}\label{feqt}D_{w_n}\cdots D_{w_1}\left(\bigoplus_{i=0}^{2m-2} v_i\right)=\bigoplus_{i=0}^{2m-2}A_{w_n+i-m+1}\cdots A_{w_1+i-m+1}v_i.
\end{equation}
Now, since $u$ is very weakly extremal for $\B$, we have
\[ \limsup_{n \to\infty}\|D_{w_n}\cdots D_{w_1}\|^{1/n}=\limsup_{n \to \infty}\|B_{y_{nm}}\cdots B_{y_1}\|^{1/n}=\varrho(\B)^m,\]
and in view of \eqref{feqt} it follows that there exists at least one integer $k \in \BZ$ such that

\begin{equation}\label{foots}
\limsup_{n \to \infty} \|A_{w_n+k} \cdots A_{w_1+k}\|^{1/n} =\varrho(\B)^m.
\end{equation}
Since $\varrho(\B)>0$ this implies that $0 \leq w_i +k \leq m-1$ for every $i \geq 1$, so the sequence $\varpi$ defined by $\varpi_i:=w_i+k$ for all $i \geq 1$ belongs to $\Sigma_m^\omega$. Obviously we also have $P(\varpi,n)=P(w,n)$ for every integer $n \geq 1$.
Using the definition of the joint spectral radius of $\A$ together with \eqref{foots} we have
\[\varrho(\B)^m=\limsup_{n \to \infty} \|A_{w_n+k} \cdots A_{w_1+k}\|^{1/n} =
\limsup_{n \to \infty} \|A_{\varpi_n} \cdots A_{\varpi_1}\|^{1/n} \leq \varrho(\A),\]
 whereas Lemma~\ref{squadge} states that $\varrho(\B)^m \geq \varrho(\A)$. We conclude that $\varrho(\A)=\varrho(\B)^m$ and therefore $\varpi$ is very weakly extremal for $\A$. By Lemma~\ref{strong} we may find a new sequence $z \in \Sigma_m^\omega$ which is strongly extremal for $\A$ and satisfies $\mathcal L(z) \subseteq \mathcal L(\varpi)$, and it follows that
\[P(x,n) \geq P(y,n) \geq P(w,\lfloor n/m\rfloor) =P(\varpi,\lfloor n/m\rfloor) \geq P(z,\lfloor n/m \rfloor)\]
for every $n \geq 1$ as required by the statement of the proposition.

Let us now prove part~(ii) of the proposition. If $z \in
\Sigma_m^\omega$ is very weakly extremal for $\mathcal{A}$, then by Lemma~\ref{strong} we may choose $y \in \Sigma_m^\omega$ which is strongly
extremal for $\mathcal{A}$ and satisfies $\mathcal{L}(y) \subseteq
\mathcal{L}(z)$. For each $i \geq 1$ define $x_{mi-k}:=1$ if $y_i=k$,
and $x_j:=0$ for all other $j$. For each $n \geq 1$ the resulting
sequence $x \in \Sigma_2^\omega$ satisfies
\[B_{x_{mn}} \cdots B_{x_1} = \left(B_0^{y_n}B_1B_0^{m-1-y_n}\right)
\cdots \left(B_0^{y_1}B_1B_0^{m-1-y_1}\right)=D_{y_n}\cdots D_{y_1}.\]
We claim that $x$ is strongly extremal for $\mathcal{B}$. Choose
$\delta>0$ such that $\|A_{y_n}\cdots A_{y_1}\| \geq
\delta\varrho(\mathcal{A})^n$ for every $n \geq 1$. Clearly for each
$n \geq 1$ we have
\[\|B_{x_{mn}}\cdots B_{x_1}\|=\|D_{y_n}\cdots D_{y_1}\| \geq
\|A_{y_n}\cdots A_{y_1}\| \geq
\delta\varrho(\mathcal{A})^n=\delta\varrho(\mathcal{B})^{mn}.\]
Now fix a real number $K>0$ such that for all $k=1,\ldots,m-1$,
\[\max\left\{\|B_{i_k} \cdots B_{i_1}\| \colon i_j \in \{0,1\}\right\}
\leq K\varrho(\mathcal{B})^{m-k}.\]
If $n \geq 1$ satisfies $n=qm+r$ with $q \geq 0$ and $0 \leq r <m$, then since

\begin{align*}\|B_{x_{m(q+1)}} \cdots B_{x_1}\| &\leq \|B_{x_{m(q+1)}}
\cdots B_{x_{mq+r+1}} \|\cdot \|B_{x_{mq+r}} \cdots B_{x_1}\|\\
&\leq K\varrho(\mathcal{B})^{m-r}\|B_{x_{mq+r}} \cdots B_{x_1}\|\\
&=K\varrho(\mathcal{B})^{m-r}\|B_{x_n} \cdots B_{x_1}\|,
\end{align*}
we have
\[\|B_{x_n} \cdots B_{x_1}\| \geq
K^{-1}\varrho(\mathcal{B})^{r-m}\|B_{x_{m(q+1)}}\cdots B_{x_1}\| \geq K^{-1}\delta \varrho(\mathcal{B})^{qm+r}=
\delta K^{-1}\varrho(\mathcal{B})^n\]
and we conclude that $x$ is strongly extremal for $\mathcal{B}$ as claimed.

Let us now show that $x$ satisfies the required subword complexity
bound. For each $r=0,\ldots,m-1$ let us say that a subword $u_1\cdots
u_n$ of $x$ occurs in position $r$ modulo $m$ if there exists $q \geq
0$ such that $u_i=x_{qm+r+i}$ for $i=1,\ldots,n$. (Note that
$u_1\cdots u_n$ may occur in position $r$ modulo $m$ for several distinct $r$.) Similarly to the proof of part (i), the definition $x_{mi-k}:=1$ if and only if $y_i=k$, and $x_k:=0$ otherwise,
implies a bijection for each $n \geq 1$ between the set of all
subwords of $y$ of length $n$ and the set of subwords of $x$ of length~$mn$ which occur in position $0$ modulo $m$. In particular, the number of subwords of $x$ of the latter type is equal to precisely $P(y,n)$.

We claim that $P(x,mn+1) \leq mP(y,n+1)$ for all $n \geq 1$, for which
we use a counting argument. Fix $n \geq 1$, and suppose that for $1\leq j \leq mP(y,n+1)+1$ the word $u_1^{(j)} \cdots u_{mn+1}^{(j)}$ is a subword of $x$. We will show that at least two of the words $u^{(j)}_1 \cdots u^{(j)}_{mn+1}$ must coincide, which implies the required bound. Now, for each $j$ there exists $0 \leq r_j <m$ such that $u_1^{(j)} \cdots u_{mn+1}^{(j)}$ occurs in position $r_j$ modulo
$m$, and it follows by the pigeonhole principle that there exists $0 \leq r <m$ such that at least $P(y,n+1)+1$ of these words occur in position $r$ modulo $m$. It follows from this that there exist
$P(y,n+1)+1$ words $w^{(j)}_1 \cdots w^{(j)}_{m(n+1)}$ which occur as subwords of $x$ in position $0$ modulo $m$ and satisfy $w_{r+i}^{(j)}=u_i^{(j)}$ for all $i=1,\ldots,mn+1$. Since there are only $P(y,n+1)$ subwords of $x$ of length $m(n+1)$ which occur in position $0$ modulo $m$, it follows that two of the words $w^{(j)}_1 \cdots w_{m(n+1)}^{(j)}$ must be equal and hence the corresponding two words $u^{(j)}_1 \cdots u_{mn+1}^{(j)}$ are also equal. We conclude
that $\mathcal{L}_{mn+1}(x)$ contains at most $mP(y,n+1)$ distinct words, which proves the claim. It follows from the truth of the claim that for every integer $n \geq 1$, we have
\[P(x,n) \leq P(x,m\lceil n/m \rceil +1) \leq mP(y,\lceil n/m \rceil+1)
\leq mP(z,\lceil n/m \rceil+1)\]
as claimed. The proof is complete.

\section{Example}\label{sec:ex}

The function $\mathfrak{r}$ defined in Theorem~\ref{sturm} can be described explicitly using formulae derived in \cite{hmst,MS}, allowing us to make the construction in Proposition~\ref{main2} (and hence Theorem~\ref{major}) explicit. For example, let $\ga:=\frac{3-\sqrt5}2$ and $\be:=1-\frac{\sqrt2}2$. Clearly $\gamma,\beta \in (0,\frac{1}{2})$ and the set $\{1,\gamma,\beta\}$ is linearly independent over $\BQ$. By \cite[Theorem~1.1]{hmst},
\begin{align*}
\alpha_*:&=\mathfrak r^{-1}(\ga)=\lim_{n \to \infty}
\left(\frac{\tau_n^{F_{n+1}}}{\tau_{n+1}^{F_n}}\right)^{(-1)^n}= \prod_{n=0}^\infty \left(1-\frac{\tau_{n-2}}{\tau_{n-1} \tau_n}\right)^{(-1)^n F_{n+1}}\\
&=0.749326546330367557943961948091344672091327\ldots,
\end{align*}
where $(F_n)$ is the Fibonacci sequence defined by $F_0:=F_1:=1$ and $F_{n+1}:=F_n+F_{n-1}$ for all $n\ge1$, and $(\tau_n)$ is defined by $\tau_{-2}:=1$, $\tau_{-1}:=\tau_0:=2$ ,and $\tau_{n+1}:=\tau_n\tau_{n-1}-\tau_{n-2}$ for all $n\ge0$.
Similarly, by \cite[Theorem~2.5]{MS} and Remarks~8.8 and 8.10 from the same paper,
\begin{align*}
\alpha_{**}:&=\mathfrak r^{-1}(\be)=\lim_{n \to \infty}
\left(\frac{t_n^{G_{n+1}}}{t_{n+1}^{G_n}}\right)^{(-1)^n}= \prod_{n=0}^\infty
\left(\frac{t_n^2t_{n-1}}{t_{n+1}}
\right)^{(-1)^nG_n}\\
&=0.569279286584142330986485601616004654998409\ldots,
\end{align*}
where the sequences $(G_n)$, $(t_n)$ are defined firstly by $G_0:=1, G_1:=2$ and $G_{n+1}=2G_n+G_{n-1}$ for all $n \geq 1$, and secondly by $t_{-2}:=1$, $t_{-1}:=t_0:=2$ and $t_{n+1}:=t_n^2t_{n-1}-\frac{t_n^2}{t_{n-1}}- \frac{t_n t_{n-2}}{t_{n-1}}-t_{n-1}$ for all $n\ge0$.
If we now define
\[
D_0:=\begin{pmatrix} 1&1&1&1 \\ 0&1&0&1 \\ 0&0&1&1 \\ 0&0&0&1 \end{pmatrix},\qquad D_1:=\alpha_* \begin{pmatrix} 1&1&0&0 \\ 0&1&0&0 \\ 1&1&1&1 \\ 0&1&0&1 \end{pmatrix},\]
\[D_2:=\alpha_{**}\begin{pmatrix} 1&0&1&0 \\ 1&1&1&1 \\ 0&0&1&0 \\ 0&0&1&1 \end{pmatrix},\qquad D_3:= \alpha_*\alpha_{**} \begin{pmatrix} 1&0&0&0 \\ 1&1&0&0 \\ 1&0&1&0 \\ 1&1&1&1 \end{pmatrix},
\]
then the proof of Proposition~\ref{main2} shows that every very weakly extremal sequence of the set $\D:=\{D_0,\ldots,D_3\}$ has subword complexity bounded below by $(n+1)^2$. If we further define a pair of $28 \times 28$ matrices $\B:=\{B_0,B_1\}$ by
\[B_0:=\begin{pmatrix}
0&I&0&0&0&0&0 \\
0&0&I&0&0&0&0 \\
0&0&0&I&0&0&0 \\
0&0&0&0&I&0&0 \\
0&0&0&0&0&I&0 \\
0&0&0&0&0&0&I \\
0&0&0&0&0&0&0
 \end{pmatrix},
\qquad
B_1:=\begin{pmatrix}
0&0&0&0&0&0&0 \\
0&0&0&0&0&0&0 \\
0&0&0&0&0&0&0 \\
D_0&0&0&0&0&0&0 \\
0&D_1&0&0&0&0&0 \\
0&0&D_2&0&0&0&0 \\
0&0&0&D_3&0&0&0
\end{pmatrix}\]
where $I$ denotes the $4 \times 4$ identity matrix, then by Proposition~\ref{squidge}, every very weakly extremal sequence of $\B$ has subword complexity bounded below by $n^2/16$.

\section{Comments and Further Questions}

The result of Theorem~\ref{major} makes it natural for us to ask the following question:

\smallskip\noindent
{\bf Question 1.} Does there exist a set of matrices $\B=\{B_0,\ldots,B_{m-1}\}$ such that for every very weakly extremal sequence $x \in \Sigma_m$,
\begin{equation}\label{meh}\limsup_{n \to \infty} \frac{\log P(x,n)}{\log n} = +\infty?\end{equation}

\smallskip

By Theorem~\ref{major} we know that for each $p$ we can construct a
    pair of matrices $\B$ such that for every very weakly extremal sequence,
\[
\liminf_{n \to \infty} \frac{\log P(x,n)}{\log n} \geq p
\]
and that there exists a strongly extremal sequence such that
\[
\lim_{n \to \infty} \frac{\log P(x,n)}{\log n} = p.
\]
Hence, one way to interpret this question is: can we find a $\B$ such that the subword complexity of $\B$ must
    necessarily be greater than polynomial?

If the answer to Question~1 is negative, then it follows from arguments in \cite{Madv} that for each $\A=\{A_0,\ldots,A_{m-1}\}$ there exists a constant $\delta>0$ such that
\[\varrho(\A)=\max_{1 \leq k \leq n} \max\left\{\rho\left(A_{i_k} \cdots A_{i_1}\right)^{\frac{1}{k}} \colon 0 \leq  i_j <m \right\} + O\left(\exp(-n^{\delta})\right)\]
in the limit as $n \to \infty$. This result, if true, would substantially strengthen the main theorem in that paper.

Note that by Proposition~\ref{squidge} above, for Question~1 to have a positive answer it is both necessary and sufficient that there exist a \emph{pair} of matrices $\B$ all of whose very weakly extremal sequences satisfy \eqref{meh}. Furthermore, by Lemma~\ref{strong} above, the truth or otherwise of Question~1 remains unchanged if ``very weakly extremal'' is replaced with ``strongly extremal''. Analogous comments also apply to the following stronger question:

\smallskip\noindent
{\bf Question 2.}
Does there exist a set of matrices $\B=\{B_0,\ldots,B_{m-1}\}$ such that for every very weakly extremal sequence $x \in \Sigma_m^\omega$,
\begin{equation}\label{entropy}\lim_{n \to \infty} \frac{1}{n}\log P(x,n)>0?\end{equation}

\smallskip

If this were true, there would exist a $\B$ and a $c > 1$
    such that the subword complexity of $\B$ is bounded below by $c^n$.
Note that the limit in \eqref{entropy} will always exist by subadditivity. By \cite[Theorem~2.3]{M2011}, a strongly extremal sequence of a set of matrices $\A=\{A_0,\ldots,A_{m-1}\}$ cannot have dense orbit in $\Sigma_m^\omega$ unless \emph{every} sequence is strongly extremal for $\A$. It follows that there must always exist at least one very weakly extremal sequence $x \in \Sigma_m^\omega$ such that the limit in $\eqref{entropy}$ is strictly less than $\log m$.

Lastly, we remark that the dimension $2^p(2^{p+1}-1)$ which appears in Theorem \ref{major} seems very unlikely to be optimal. By either eliminating symmetries in the use of exterior products in the proof of Proposition \ref{main2}, or refining the construction of the block matrices in the proof of Proposition \ref{squidge}, it might be possible to reduce this dimension somewhat whilst retaining in outline the method used in this paper (but with a longer proof). More generally, we ask the following question:

\smallskip\noindent
{\bf Question 3.}
For each integer $p \geq 1$, how large is the smallest integer $d(p)$ for which there exists a pair of $d(p) \times d(p)$ matrices $\B=\{B_0,B_{1}\}$ such that for every very weakly extremal sequence $x \in \Sigma_m^\omega$,
\[\liminf_{n \to \infty} \frac{\log P(x,n)}{\log n}\geq p?\]

\smallskip

In view of Proposition \ref{squidge} a positive answer to Question 2 would imply that $d(p)$ is bounded with respect to $p$. In any event we do not anticipate that Question 3 will be easy to answer.
%\bibliographystyle{plain}
%\bibliography{paper}

\end{document}